\crefname{problem}{Problem}{Problems} \Crefname{problem}{Problem}{Problems}
\renewcommand{\phi}{\varphi} \newcommand\epsi\varepsilon
\newcommand\wto\rightharpoonup
\renewcommand\epsilon\varepsilon
\newcommand{\TheTitle}{Well-posedness and regularity of solutions to neural field
  problems with dendritic processing.} \newcommand{\TheShortTitle}{} \newcommand{\TheAuthors}{}
\headers{\TheShortTitle}{\TheAuthors}
\newcommand{\changed}[1]{#1}
\title{{\TheTitle}}
\author{Daniele Avitabile,
  \thanks{Amsterdam Centre for Dynamics and Computation, Department of Mathematics,
  Vrije Universiteit Amsterdam, The Netherlands. Inria MathNeuro, Montpellier,
  France.
\email{d.avitabile@vu.nl}} 
  \and 
  Nikolai~V. Chemetov,
  \thanks{Department of Computing and Mathematics, University of S{\~a}o
Paulo, 14040-901 Ribeir{\~a}o Preto - SP, Brazil, \email{nvchemetov@gmail.com}} 
\and 
P.~M. Lima,\thanks{Instituto Superior Tecnico, University of Lisbon
\email{pedro.t.lima@ist.utl.pt}}} 
\begin{document} 

\maketitle

\begin{abstract}
  We study solutions to a recently proposed neural field model in which dendrites are
  modelled as a continuum of vertical fibres stemming from a somatic layer. Since
  voltage propagates along the dendritic direction via a cable equation with nonlocal
  sources, the model features an anisotropic diffusion operator, as well as an
  integral term for synaptic coupling. The corresponding Cauchy problem is thus
  markedly different from classical neural field equations. We prove that the weak
  formulation of the problem admits a unique solution, with embedding estimates
  similar to the ones of nonlinear local reaction-diffusion equations. Our analysis
  relies on perturbing weak solutions to the diffusion-less problem, that is, a
  standard neural field, for which weak problems have not been studied to date. We
  find rigorous asymptotic estimates for the problem with and without diffusion, and
  prove that the solutions of the two models stay close, in a suitable norm, on
  finite time intervals. We provide numerical evidence of our perturbative results.
\end{abstract}

\section{Introduction} The dynamics of brain activity is
intrinsically nonlocal: at microscopic level, the cellular nucleus of a neuron is in
its \textit{soma}, whose average diameter is 20 micrometres; its \textit{dendritic
branches}, collecting input from other neurons, have an extent variable between 100
micrometers and 1 millimetre \cite[Table 1.1]{Harris}; its \changed{\textit{axons}}, which
transmit activity to other neurons, have terminations as far as 1 metre away from the
soma \cite{maChapterAxonGrowth2013}. 

Coarse-grained descriptions of large-scale neuronal activity inherit this
nonlocality, and model long-distance
cortical connections via integral terms. The simplest and most popular of these
models is the \textit{neural field equation} \begin{equation} \label{1old}
  \begin{aligned} & \partial _{t}v(x,t) = -\gamma v(x ,t) + \int_{\Omega}
  W(x,x^{\prime })S(v(x^{\prime}))dx^{\prime } + G(x,t), && (x,t) \in \Omega \times
[0,T],\\ & v(x,0) = v_0(x), && x \in \Omega. \end{aligned} \end{equation}

In this model, $v(x,t)$ represents the macroscopic voltage at time $t$ and position
$x$ in the continuum cortex $\Omega$. The voltage decays at rate $\gamma$, and is
influenced by the nonlocal integral coupling, and by the external input $G$. The
former collects contributions from the whole cortex: the \textit{firing rate function}
$S$ acts as a nonlinear gain, and the \textit{synaptic kernel} $W$ models connections
from point $x'$ to point $x$ in the cortex.

Since their introduction by Wilson and Cowan~\cite{wilson1973mathematical}, and by
Amari~\cite{amari1977dynamics}, neural fields have been used to model large-scale
patterns of activity (see the texbooks and monographs
\cite{Ermentrout.1998qno,Ermentrout.2010,Bresloff.2012,
Bressloff.2014k0p,coombes2014neural,coombesNeurodynamics2023}).
Mathematical neuroscientists have progressively developed dynamical systems
methods to study the wide variety of nonlinear patterns supported by neural field
equations, including localised bumps, travelling waves, and rotating waves
\cite{ermentroutExistenceUniquenessTravelling1993,Folias.2005, Gils.2013,
kilpatrick2013wandering, meijerTravellingWavesNeural2014, laing2014numerical,
kilpatrickPulseBifurcationsStochastic2014, Visser.2017,schmidt2020bumps}. The
analytical treatment of these evolution equations was initated in the 1990s, by
proving the existence of travelling wave solutions using perturbative arguments
\cite{ermentroutExistenceUniquenessTravelling1993}. In addition, the well-posedness
of \cref{1old} has been studied with functional analytical methods, as a Cauchy
problem on the spaces of continuous functions \cite{Potthast:2010kb} and square
integrable functions \cite{faugeras2008absolute} defined on the cortical space
$\Omega$.

Since solutions in closed form are available only in simple cases, numerical methods
have been proposed for one- and two-dimensional cortices. These algorithms must
overcome the problem of evaluating efficiently the nonlocal term, and schemes that
leverage rank-reduction \cite{Lima2015xt}, Fast Fourier Transforms over $\Omega$
\cite{Rankin2013} or over $[0,T]$ \cite{Hutt.2010} have been proposed. Recently,
\changed{
collocation and Galerkin, finite elements and spectral schemes 
}
have been
analysed systematically, as abstract projection schemes for neural fields, seen as
Cauchy problems on Banach-spaces \cite{AvitabileProjection}.

The neural field equation \cref{1old} is now a well established phenomenological
model for neurodynamics in the cortex, and has been extended in several ways. Models
with multiple
populations \cite{pintoSpatiallyStructuredActivity2001} and with transmission delays
\cite{faye2010a,meijerTravellingWavesNeural2014} have been proposed in the 2000s, whereas systems incorporating
time-dependent synaptic kernels to model short-term plasticity are becoming available
at the time of writing \cite{cihakMultiscaleMotionDeformation2023}. As new models are conceived, novel
mathematical approaches are developed to analyse them: neural fields with delays, for
instance, led to the development of a dedicated theory based on sun-star calculus
\cite{visserSpikingNeuronsBrain2013,Gils.2013,Visser.2017}.

\changed{
  Models with nonlocal terms arise also in other branches of the life sciences.
  In particular, a wide range of nonlocal reaction-diffusion equations have been
  studied to describe specific aspects of the evolution of biological populations 
  (see \cite[Sections 1.2.4, Chapter 4]{volpertEllipticPartialDifferential2014} and
  references therein).
  In such models integral terms arise naturally when modelling populations competing
  for global resources, speciation, and cooperation in reproduction. In this context,
  a notable example is the nonlocal Fisher-KPP 
  equation~\cite{berestyckiNonlocalFisherKPP2009,perthameConcentrationNonlocalFisher2007}.
  Nonlocal models are also found in models of tumour growth, as a consequence of variations in
  nutrients or oxygen concentration, or to model chemotherapy and immune system
  responses~\cite{carrereInfluenceMutationsPhenotypicallystructured2020}. Recently,
  integro-differential equations have been proposed to model the  emergence of cities
  and urban patterning \cite{whiteley2022modelling}. The growing literature on the
  subject has shown that a rigorous theoretical analysis is paramount to capture the
  nonlocal behaviour of solutions with respect different biological parameters.
  Important investigations of non-local reaction-diffusion theory can be found, for
  instance, in
  \cite{bouinBramsonDelayNonlocal2020,perthameConcentrationNonlocalFisher2007}.
}

The present article concerns the theoretical treatment of one of the more recent
extensions to the neural field equations, which include a diffusive term. Neural
fields with diffusive terms have been analysed in models with delays
\cite{spekNeuralFieldModels2020}: in this context, the powerful machinery of sun-star
calculus has been exploited to tackle the diffusive term. Recently, two models
without delays have been proposed: in the first one, an anisotropic diffusion term
emerges in the voltage equation \cref{1old}, upon modelling dendrites in the tissue
\cite{Avitabile.2020};
in the second one, a neural field for voltage dynamics is coupled to a
reaction-diffusion equation for potassium dynamics, to model spreading depression
\cite{baspinar:hal-04008117}.

The latter models have an interesting and unexplored mathematical structure. The
diffusion-less neural field equation \cref{1old} admits classical
solutions, and the technical apparatus to analyse them relies heavily on the
compactness, boundedness, and Lipschitz properties of the integral operator. In models
with diffusion the scenario changes considerably, owing to the presence of unbounded
differential operators. The motivation behind the present article is that, in
models with diffusion and without delays, one should be able to study solutions using
classical methods for PDE analysis, and view the integral term as a nonlinear,
well-behaved perturbation of a parabolic problem. 

Vanishing diffusion problems are of great importance also outside of mathematical
neuroscience, in the mathematical and physical communities. For instance, such
problems model atmospheric-oceanographic flows and landscape evolution when fluvial erosion
dominates over smoothing tendencies of the soil diffusion; further, artificial
vanishing diffusion is used for stabilization of numerical simulations. An outstanding problem in
fluid mechanics is the control of boundary layer turbulence: small forces of viscous
friction may perceptibly affect the motion of fluids, even in fluids with small
viscosity, such as water and air. Knowledge of the behavior of solutions
for small viscosities is crucial for understanding turbulence phenomena, which have
strong consequences in branches of engineering dealing with car and aircraft
productions, turbine blades, and nano-technology. We refer to
\cite{Chemetov1,Chemetov2,Chemetov3,Chemetov4, Chemetov5} for mathematical studies on
problems with vanishing viscosity.

\begin{figure}
  \centering
  \includegraphics[width=\textwidth]{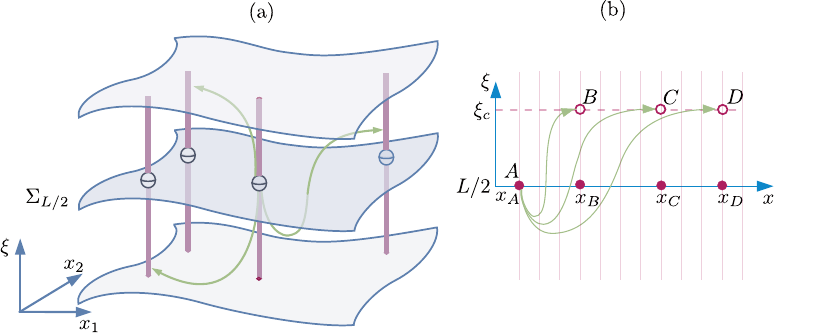}
  \caption{(a) Sketch of the cortical domain $\Omega = \cup_{\xi \in [0,L]}
  \Sigma_\xi$, in which $\Sigma_{L/2}$ is the somatic layer. Dendrites are modelled
  as a continuum of vertical, unbranched fibres, with synaptic connections to other dendrites. (b)
  In \cite{Avitabile.2020}, and in the numerical experiments of the present paper,
  we take $\Sigma_{L/2} \subset \RSet$, with kernel specified by \cref{eq:kernel}.
  This kernel prescribes localised connections from a small neighbourhood of the
  somatic layer $\xi = L/2$, to a neighbourhood of the somatic coordinate $\xi =
  \xi_c$, with strength dependent on the mutual distance between points, projected on
the $x$-axis. This modelling assumption is used in simulations, but not for the
theoretical results, which work for generic kernels, and with somatic layers of
arbitrary dimensions.}
  \label{fig:sketch}
\end{figure}

\subsection{Model with anisotropic diffusion, and main results of the paper}\label{ssec:DiffusionModel} 
\changed{
We demonstrate this strategy
} 
on the model proposed in \cite{Avitabile.2020},
in which the cortex has an embedded anisotropy, owing to its being split into
\textit{somatic} and \textit{dendritic} directions. With reference to the sketch in
\cref{fig:sketch}, the cortex $\Omega \subset \RSet^3$ is composed of 
dendritic fibres, aligned vertically to $U =(0,L) \subset \RSet$, with coordinate
$\xi$. Dendritic fibers are unbranched, and afferent to somas, which form a continuum
somatic layer, a 2-manifold $\Sigma_{L/2} \subset \RSet^3$. The subscript in $\Sigma$
indicates that
the somas belong to a medial layer located at position $\xi = L/2$, and it hints 
that the cortex is conceived as a foliation $\Omega = \cup_{\xi \in U} \Sigma_\xi$ of
which the somatic layer is the leaf by $\xi = L/2$.

In \cite{Avitabile.2020} the somatic layer was assumed to be the 1-torus $\Sigma =
\TSet$, whereas in the present paper we treat the more general case $\Sigma = 
\TSet^n$, for some $n \in \{ 1,2 \}$. The cortex is thus specified as a tensor
product, with coordinates $(x,\xi) \in \Omega = \TSet^n \times U$. Finally, for
notational convenience, we introduce the sets $U_T = U \times [0,T]$, and
$\Omega_T=\Omega \times [0,T]$.

The somato-dendritic model differs from a
standard neural field model, in that the voltage $v(x,\xi,t)$ propagates according to
a cable equation 
along the dendritic fibre direction $\xi$ 
\changed{(see for instance \cite{tuckwellIntroductionTheoreticalNeurobiology2006})}, 
with a 
\changed{
  an external input $G$ and a
}
nonlocal input current $F$ from the somatic layer, 
\begin{equation}\label{eq:NFNew}
  \begin{aligned}
   & 
   \begin{aligned}[b]
   \partial_t v(x,\xi,t) 
   = (-\gamma + \nu \partial_{\xi}^{2})v(x,\xi,t) 
   &+ G(x,\xi,t) \\
   &+F(v(\blank,\blank,t))(x,\xi), 
   \end{aligned}
   && (x,\xi,t) \in \Omega_T, 
   \\
   &\partial_\xi v(x,0,t) = 0, \qquad  \partial_\xi v(x,L,t) = 0, 
   &&(x,t) \in \TSet^n \times [0,T],\\
   &  v(x,\xi,0) = v_0(x,\xi) 
   &&(x,\xi) \in \Omega,
  \end{aligned}
\end{equation}
where periodic boundary conditions in $x$ are tacitly implied by taking $x \in
\TSet^n$, and where the nonlocal term is given by
\begin{equation}\label{eq:FDef}
  F(u)(x,\xi) = \int_{\Omega} W(x,\xi,x',\xi') S(u(x',\xi'))\, dx' d\xi'.
\end{equation}
In passing, we note that Neumann boundary conditions have been chosen to model
no-flux across the leaves at $\xi = 0,L$, but other choices are also possible.

Somatic currents are modelled through the nonlocal operator $F$.
In \cite{Avitabile.2020} the firing rate $S$ is taken to be a bounded monotone
smooth function, in line with neural field literature. In addition, the synaptic
kernel
\begin{equation}\label{eq:kernel}
W(x,\xi,x',\xi') = w(\|x-x'\|_2)\delta_\rho(\xi - \xi_\text{c})\delta_\rho(\xi'-L/2),
\end{equation}
where $w \colon \TSet \to \RSet$, and $\delta_\rho \colon \RSet \to
\RSet$ is a function supported on $(-\rho,\rho)$ for some $\rho >0$, is used to model
currents generated in a neighbourhood of the somatic layer, at
$\xi = L/2$, and transferred to contact points in the neighbourhood of the leaf at
$\xi = \xi_\text{c}$ (see also \cref{fig:sketch}(b)). Concurrently, activity is allowed to propagate within a leaf, via the
distance-dependent function $w$.

Model \cref{eq:NFNew} is anisotropic: instead of the Laplacian operator
$\Delta=\partial^2_{x} + \partial_{\xi}^{2}$, the model features diffusion only along
the fibre coordinate $\xi$. In addition to this local anisotropy, which is not
removable, the synaptic kernel may induce a nonlocal anisotropy, as is the case for
\cref{eq:kernel}. The paper \cite{Avitabile.2020} proposes and analyses a bespoke
numerical scheme for \cref{eq:kernel}, based on differentiation matrices for the
differential operator, and a fast quadrature scheme for the integral operator. This
scheme exploits both anisotropies to evaluate efficiently the right-hand side and
time step the system with an implicit-explicit method. Assuming well-posedness of the
problem, existence and regularity of its solutions, it was shown that \cref{eq:NFNew}
supports anisotropic travelling waves, with propagating speed along the leaves, as
well as Turing-like instabilities.

The present paper provides a functional analytic treatment of \cref{eq:NFNew}, for
generic choices of $S$, $G$, $v_0$ and $W$, that is, without assuming the anisotropic
kernel \cref{eq:kernel}. The main results of the paper can be summarised as follows:
\begin{enumerate}
  \item Under mild, biologically relevant assumptions on $W$ and $S$, the operator
    $F$ is Lipschitz on an appropriately defined function space, and it perturbs
    boundedly the parabolic, anisotropic problem $\partial_t v = (-\gamma +
  \nu\partial_\xi^2)v$ with mixed Neumann/periodic boundary conditions.
   \item The weak form of \cref{eq:NFNew} admits a unique solution, with embedding
     estimates similar to the ones of a nonlinear reaction-diffusion problem with
     local nonlinear forcing.
   \item The analysis of weak solutions $v_\nu$ to \cref{eq:NFNew} for $\nu >0$
     relies on studying weak solutions $v$ to the singular limit $\nu = 0$ of the
     problem, that is, a standard neural field posed on $\Omega$, which does not
     require specifications of  boundary conditions. In so doing, we derive also
     sharper estimates on standard neural field problems, with respect to the ones
     currently available in literature. 
   \item We prove that solutions $v_\nu$ depend continuously on the parameter $\nu$
     and, demanding additional regularity of the problem data, that the
     difference $v_\nu - v$ is an $O(\nu^{1/2})$ as $\nu \to 0$ in an appropriate norm.
   \item We produce numerical evidence of the results above using the kernel and
     numerical scheme studied in \cite{Avitabile.2020}.
\end{enumerate}

The analysis presented here provides the necessary theoretical foundation for
developing numerical schemes on realistic geometries, employing for instance
finite-elements discretisation, or multi-scale schemes for weakly diffusive
problems. Importantly, it seems possible to
adapt the analysis presented below, with minor modifications, to the case of generic
$\Omega$ with curved leaves $\Sigma_\xi$, or to neural fields with multiple
populations and isotropic diffusion, such as the ones in \cite{baspinar:hal-04008117}.

The paper is structured as follows: in \cref{sec:preliminary} we set-up the abstract
formulation of system \cref{eq:NFNew}; we study the singular problem ($\nu=0$) in
\cref{sec:nu0}, and the regular problem ($\nu >0$) in \cref{sec:nuGeq0}; we prove
continuous dependence of the solutions on $\nu$ in \cref{sec:contDepend}, and the
$O(\nu^{1/2})$ scaling in \cref{sec:convergenceRate}; we present numerical
experiments in \cref{sec:numerics}, and we conclude in \cref{sec:conclusions}.

\section{Preliminary assumptions and results} \label{sec:preliminary}
We begin by describing the appropriate function spaces that will be used in
the analysis of our problem.

For a given topological space $\mathbb{X}$, we denote by $L^{2}(\mathbb{X})$ the
space of measurable square integrable functions $u:\mathbb{X}%
\rightarrow \mathbb{R}$ with the usual inner product $(\blank ,\blank )$ and
the norm $\Vert u\Vert _{L^{2}(\mathbb{X})}=\sqrt{(u,u)}$.

Let $X$ be a real Banach space with norm $\left\Vert \blank \right\Vert
_{X}.$ We shall denote by $X^*$ the dual of $X$, and by $L^{q}(0,T;X)$, $q \in
[0,\infty ]$, the space of $X$-valued measurable functions endowed with the norm 
\[
\begin{aligned}
& \Vert u\Vert _{L^{q}(0,T;X)} 
  = \biggl( \int_{0}^{T}\Vert u(t)\Vert_{X}^{q}\biggr) ^{1/q}<\infty , 
  && 1\leq q<\infty, \\
& \Vert u\Vert _{L^{\infty }(0,T;X)} =\sup_{t\in \lbrack 0,T]}\Vert u(t)\Vert _{X}<\infty, 
  && q=\infty.
\end{aligned}
\]

We make the following standing assumptions: 
\begin{hypothesis}[General assumptions]\label{hyp:general}
  \begin{enumerate}
    \item The cortical domain is $\Omega = \TSet^n \times U$, where $U \in
      (0,L)$, $L>0$.
    \item The synaptic kernel $W$ is a function in $L^2(\Omega \times
      \Omega)$.
    \item The firing rate $S \colon \RSet \to \RSet$ is a bounded and everywhere differentiable Lipschitz
      function. 
  \end{enumerate}
\end{hypothesis}
Further regularity assumptions on the forcing $G$ and the initial condition $v_0$ will be
provided in the statements of the upcoming results. In preparation for the abstract
setup of the problem we derive useful properties of the integral operator
\begin{equation}\label{eq:integral}
  F(u)(x,\xi) = \int_{\TSet^n \times U} W(x,\xi,x',\xi') S(u(x',\xi'))\, dx' d\xi'.
\end{equation}

\begin{lemma}[Boundedness an Lipschitzianity of $F$]\label{cor:FEstimates}
  Under \cref{hyp:general} the operator $F$ defined in \cref{eq:integral} is
  on $L^2(\Omega)$ to itself, and there exists a constant $K_F >0 $ such that, for
  any $u, v \in L^2(\Omega)$
  \[
    \| F(u) \|_{L^2(\Omega)} \leq K_F,  
    \qquad 
    \| F(u) - F(v) \|_{L^2(\Omega)} \leq K_F \| u - v \|_{L^2(\Omega)}.
  \]
  
  Further for any $q \in [1,+\infty]$, any $u,v \in L^q(0,T;L^2(\Omega))$, and any $t \in [0,T]$
  \[
    \| F(u(t)) \|_{L^2(\Omega)} \leq K_F,  
    \qquad 
    \| F(u(t)) - F(v(t)) \|_{L^2(\Omega)} \leq K_F \| u(t) - v(t) \|_{L^2(\Omega)}.
  \]
\end{lemma}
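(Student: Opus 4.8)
The plan is to bound the operator $F$ by moving the integral inside the $L^2$-norm via Cauchy--Schwarz in the spatial variables $(x,\xi)$, thereby exploiting the two key hypotheses: $W \in L^2(\Omega \times \Omega)$ and the boundedness of $S$. First I would fix $u \in L^2(\Omega)$ and estimate $\|F(u)\|_{L^2(\Omega)}^2 = \int_\Omega \bigl| \int_\Omega W(x,\xi,x',\xi') S(u(x',\xi'))\,dx'd\xi' \bigr|^2 dx\,d\xi$. Applying Cauchy--Schwarz to the inner integral gives, for each $(x,\xi)$, the bound $\int_\Omega |W(x,\xi,x',\xi')|^2 dx'd\xi' \cdot \int_\Omega |S(u(x',\xi'))|^2 dx'd\xi'$. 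The first factor integrates over $(x,\xi)$ to $\|W\|_{L^2(\Omega\times\Omega)}^2$, while the second factor is at most $\|S\|_\infty^2 |\Omega|$ since $S$ is bounded (here $|\Omega| = L\cdot|\TSet^n|<\infty$). Hence $\|F(u)\|_{L^2(\Omega)} \leq \|W\|_{L^2(\Omega\times\Omega)} \|S\|_\infty |\Omega|^{1/2} =: K_1$.

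For the Lipschitz estimate I would repeat the argument on the difference: $F(u)(x,\xi) - F(v)(x,\xi) = \int_\Omega W(x,\xi,x',\xi')\bigl(S(u(x',\xi')) - S(v(x',\xi'))\bigr)\,dx'd\xi'$. Cauchy--Schwarz in $(x',\xi')$ followed by integration in $(x,\xi)$ yields $\|F(u)-F(v)\|_{L^2(\Omega)}^2 \leq \|W\|_{L^2(\Omega\times\Omega)}^2 \int_\Omega |S(u(x',\xi'))-S(v(x',\xi'))|^2 dx'd\xi'$. Since $S$ is Lipschitz, $|S(u)-S(v)| \leq L_S |u-v|$ pointwise, so the last integral is at most $L_S^2 \|u-v\|_{L^2(\Omega)}^2$, giving the Lipschitz bound with constant $K_2 := L_S \|W\|_{L^2(\Omega\times\Omega)}$. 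Setting $K_F := \max\{K_1, K_2\}$ gives both claimed inequalities on $L^2(\Omega)$ simultaneously; in particular this shows $F$ maps $L^2(\Omega)$ into itself (the presumably intended reading of the slightly garbled phrase "the operator $F$\dots\ is on $L^2(\Omega)$ to itself").

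The time-dependent statements are then an immediate corollary requiring no new analysis. For $u \in L^q(0,T;L^2(\Omega))$ and any fixed $t \in [0,T]$, the slice $u(t)$ lies in $L^2(\Omega)$, and $F(u(t))$ simply means $F$ applied to that slice; the two pointwise-in-$t$ inequalities are exactly the $L^2(\Omega)$ estimates already established, applied to $u(t)$ and $v(t)$. There is essentially no obstacle here: the only mild point of care is the measurability of $t \mapsto F(u(t))$ as an $L^2(\Omega)$-valued map (needed implicitly for the statement to be meaningful), which follows from the Lipschitz continuity of $F \colon L^2(\Omega)\to L^2(\Omega)$ composed with the strong measurability of $t \mapsto u(t)$. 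The main --- and only --- genuinely quantitative step is the double application of Cauchy--Schwarz in the first part; everything else is bookkeeping.
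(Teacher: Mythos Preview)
Your proof is correct and is essentially the same argument as the paper's: the paper factors $F = A \circ N$ with $N$ the Nemytskii operator $u \mapsto S\circ u$ and $A$ the Hilbert--Schmidt integral operator with kernel $W$, then quotes the bounds $\|N(u)\|_{L^2}\le |\Omega|^{1/2}\|S\|_\infty$, Lipschitz constant $\|S'\|_\infty$, and $\|A\|\le \|W\|_{L^2(\Omega\times\Omega)}$, which unwinds to precisely your Cauchy--Schwarz computation and yields the identical constant $K_F=\|W\|_{L^2(\Omega\times\Omega)}\max(|\Omega|^{1/2}\|S\|_\infty,\|S'\|_\infty)$. For the time-dependent part the paper simply notes that $u\in L^q(0,T;L^2(\Omega))$ takes values in $L^2(\Omega)$, exactly as you do; your additional measurability remark is a welcome clarification not present in the paper.
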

\begin{proof}
  The results follow standard arguments, summarised in \cite{AvitabileProjection},
  and references therein, in particular \cite{faugeras2008absolute}. Denote by $N$
  the Nemytskii operator $N(u)(x) = S(u(x))$, and by $A$ the linear integral operator
  with kernel $W$, so as to write $F(u) = (A \circ N)(u)$. 

  By \cite[Lemma 2.5]{AvitabileProjection} the operator $N \colon L^2(\Omega) \to
  L^2(\Omega)$ satisfies the homogeneous bound $\| N(u) \|_{L^2(\Omega)}  \leq
  |\Omega|^{1/2} \|  S \|_{\infty }$, and is Lipschitz with constant $\|S'
  \|_{\infty}$. Further by \cite[Lemma 2.6]{AvitabileProjection}, $A \colon
  L^2(\Omega) \to L^2(\Omega)$ is bounded with $\| A \|  \leq \| w \|_{L^2(\Omega \times \Omega)}$. 
  
  Putting this together, it holds for all $u \in L^2(\Omega)$
  \[
    \| F(u) \|_{L^2(\Omega)} \leq \| A \| \| N(u) \|_{L^2(\Omega)} 
      \leq \| w \|_{L^2(\Omega \times \Omega)} |\Omega|^{1/2} \|  S \|_{\infty},
  \]
  and for all $u,v \in L^2(\Omega)$
\[
  \begin{aligned}
    \| F(u) - F(v) \|_{L^2(\Omega)} 
        \leq  \| A \| \| S(u)-S(v) \|_{L^2(\Omega)} 
        \leq  \| w \|_{L^2(\Omega \times \Omega)} \| S' \|_{\infty} \| u-v \|_{L^2(\Omega)}.
  \end{aligned}
\]
The statement is proved by setting
\[
  K_F = \| w \|_{L^2(\Omega \times \Omega)} \max \bigl( |\Omega|^{1/2} \|  S \|_{\infty}, \| S' \|_{\infty} \bigr),
\]
and noting that any function in $L^q(0,T;L^2(\Omega))$ takes values in $L^2(\Omega)$.
\end{proof}

\subsection{Abstract problem set-up}\label{ssec:abstractProblems} 
Equipped with the operator $F$ of the previous sections, we now consider two abstract
problems. The first one is a rewriting of \cref{eq:NFNew}, the neural field problem
with anisotropic diffusion posed on $\Omega = \TSet^n \times U$,
\begin{equation}\label{eq:model}
  \begin{aligned}
    & \partial_t v = \nu \partial_\xi^2 v - \gamma v + F(v) + G 
    && \text{on $\TSet^n \times U \times (0,T]$,}
    \\
    & \partial_\xi v = 0
    && \text{on $\TSet^n \times \partial U \times (0,T],$}
    \\
    & v = v_0
    && \text{on $\TSet^n \times U \times \{t = 0\}$.}
  \end{aligned}
\end{equation}

The second one is a Cauchy problem obtained from \cref{eq:model}
by setting $\nu = 0$ and removing Neumann boundary conditions in $\xi$
\begin{equation}\label{eq:NFAbstract}
  \begin{aligned}
    & \partial_t v = -\gamma v + F(v) + G && \text{on $\TSet^n \times U \times (0,T]$,} \\
    & v = v_0 && \text{on $\TSet^n \times U \times \{t=0\}$.}
  \end{aligned}
\end{equation}
The problem is posed on a tensor product domain between the cortex $\TSet^n$ and the
dendritic segment $U$, but it does not account for dendrites as cables (and there is
no diffusion in the problem). This model is a classical neural field. Existence of
classical solutions to this problem have been studied by several authors from a
functional analytic viewpoint,
\cite{Faugeras:2009gn,Potthast:2010kb,AvitabileProjection}. In the present article we
will discuss weak solutions to the problem, as this provides the appropriate
functional analytical set-up to compare solutions to the problem with anistropic
(dendritic) diffusion.

We will colloquially refer to \cref{eq:model} as the \textit{regular problem} $(\nu >
0)$, and to \cref{eq:NFAbstract} as the \textit{singular problem} ($\nu = 0$). We are
interested in characterising solutions to the former as $\nu$-perturbations of
solutions to the latter.

\section{Existence of solution to the singular problem, 
  \texorpdfstring{$\nu=0$}{}}\label{sec:nu0}
Our
starting point is a result descending from the classical theory described in
\cite{Evans,Mih}, and concerning the solution of a linear
ordinary differential equation in the Sobolev space $H^{1}(0,T;\RSet)$. The results
presented in this section offer an accessible entry point to the ones of the
following sections, with respect to which they have a similar approach, but fewer
technical details.

\begin{lemma}
\label{the_1 copy(1)} For any $\gamma \in \RSet_{\geq 0}$, $g\in \mathbb{R}$, and $f\in
L^{2}(0,T;\RSet)$ there exists a unique solution $v\in H^{1}(0,T;\RSet)$ to the
linear inhomogeneous
ordinary differential equation
\begin{equation}
\begin{aligned}
& \frac{d}{dt} v=-\gamma v+f  && \text{a.e. in $(0,T)$}, \\ 
& v=g                      &&  \text{on $\{ t =0 \}$}.
\end{aligned}
 \label{nu0}
\end{equation}
Further, the solution $v$ satisfies the following estimate
\begin{equation*}
\Vert v\Vert _{L^{\infty }(0,T;\RSet)}^{2}+\gamma ||v\Vert _{L^{2}(0,T;\RSet)}^{2}+\Vert
\partial _{t}v\Vert _{L^{2}(0,T;\RSet)}^{2}\leq C\left( |g|+\Vert
f\Vert_{L^{2}(0,T;\RSet)}^{2}\right),
\end{equation*}%
where the positive constant $C$ depends solely on $T$.
\end{lemma}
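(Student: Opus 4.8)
The plan is to prove existence and uniqueness by an explicit representation formula, and then derive the stated energy estimate by testing the equation against natural quantities. First I would observe that the unique candidate solution is given by the variation-of-constants (Duhamel) formula
\[
  v(t) = e^{-\gamma t} g + \int_0^t e^{-\gamma(t-s)} f(s)\, ds.
\]
Since $f \in L^2(0,T;\RSet) \subset L^1(0,T;\RSet)$, this $v$ is absolutely continuous on $[0,T]$, hence in $H^1(0,T;\RSet)$ once we check $\partial_t v \in L^2$; it satisfies the ODE a.e.\ and the initial condition pointwise. Uniqueness follows because the difference $w$ of two solutions solves $\partial_t w = -\gamma w$, $w(0) = 0$, so $\tfrac{d}{dt}\tfrac12 |w|^2 = -\gamma |w|^2 \le 0$ forces $w \equiv 0$; alternatively this is the standard Carath\'eodory/Gr\"onwall argument. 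This part is routine and I would keep it brief.

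The substantive part is the a priori estimate, which I would obtain by the energy method rather than from the representation formula, since the same technique scaffolds the PDE arguments in later sections. Multiplying the ODE by $v$ and integrating over $(0,t)$ gives
\[
  \tfrac12 |v(t)|^2 + \gamma \int_0^t |v(s)|^2\, ds
  = \tfrac12 |g|^2 + \int_0^t f(s) v(s)\, ds,
\]
and Young's inequality $\int_0^t f v \le \tfrac12 \|f\|_{L^2(0,T)}^2 + \tfrac12 \int_0^t |v|^2\, ds$ together with Gr\"onwall's lemma controls $\|v\|_{L^\infty(0,T;\RSet)}^2$ and $\gamma\|v\|_{L^2(0,T;\RSet)}^2$ by $C(T)\big(|g|^2 + \|f\|_{L^2(0,T;\RSet)}^2\big)$. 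To bound $\|\partial_t v\|_{L^2(0,T;\RSet)}^2$ I would use the equation directly: $\|\partial_t v\|_{L^2}^2 \le 2\gamma^2 \|v\|_{L^2}^2 + 2\|f\|_{L^2}^2$, and substitute the bound just obtained for $\|v\|_{L^2}^2$. Summing the three pieces yields the claimed inequality with a constant depending only on $T$ (and harmlessly on $\gamma$, which is fine since in the applications $\gamma$ is a fixed parameter; if one wants genuine $\gamma$-uniformity one notes the coefficient of $\|v\|_{L^2}^2$ on the left is exactly $\gamma$, so no division by $\gamma$ occurs).

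One technical point to handle carefully: to justify the identity $\tfrac{d}{dt}\tfrac12|v|^2 = v\,\partial_t v$ in the integrated form above for $v \in H^1(0,T;\RSet)$ I would invoke the standard result that $H^1(0,T;\RSet) \hookrightarrow C([0,T])$ and that $t \mapsto |v(t)|^2$ is absolutely continuous with the expected derivative (e.g.\ \cite{Evans}); this is exactly the tool that later generalises to the Lions–Magenes lemma for $v \in L^2(0,T;H^1) \cap H^1(0,T;H^{-1})$. The main obstacle here is essentially presentational — there is no deep difficulty — namely making the bootstrap from the $L^2$-in-time bound on $v$ to the $L^2$-in-time bound on $\partial_t v$ cleanly, and tracking that the final constant depends on $T$ alone in the sense intended by the authors. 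I would also note in passing that the right-hand side of the stated estimate has $|g|$ rather than $|g|^2$; since one may assume $|g| \le 1 + |g|^2$ (or simply absorb it), this is not a real issue, but I would either reproduce their form verbatim or remark on the discrepancy.
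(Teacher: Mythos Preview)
Your proposal is correct. Note, however, that the paper does not supply its own proof of this lemma: it is presented as a classical fact descending from the cited references (Evans, Mikhailov), and the text moves directly to the next theorem. Your argument---Duhamel formula for existence and uniqueness, energy method for the $L^\infty$ and $\gamma\,L^2$ bounds, then the equation itself for $\|\partial_t v\|_{L^2}$---is the standard route and, in structure, anticipates the proofs the paper \emph{does} give for the nonlinear and PDE versions that follow. Your side remarks on the $|g|$ versus $|g|^2$ discrepancy and on the implicit $\gamma$-dependence of the constant $C$ are both accurate.
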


We now prove a statement in a similar spirit of the previous lemma, but for the
abstract singular neural field problem.

\begin{theorem} \label{the_2 copy(1)}
 Under \cref{hyp:general}, for any $\gamma >0$, $v_0 \in
 L^{2}(\Omega)$ and $G\in L^{2}(\Omega_T)$, there exists a unique solution 
  \begin{equation}\label{eq:emb_1Nik}
      v \in L^2(\Omega_T) 
      \cap H^{1}(0,T;L^2(\Omega))  
      \hookrightarrow C([0,T];L^2(\Omega)),
  \end{equation}
to the problem 
\begin{equation} \label{nu3}
  \begin{aligned}
    & \partial_t v = -\gamma v + F(v) + G && \text{a.e. in $\Omega_T$,} \\
    & v = v_0 && \text{on $\Omega \times \{t=0\}$.}
  \end{aligned}
\end{equation}
such that
\begin{equation}\label{eq:nu3Est}
  \Vert v\Vert _{L^{\infty }(0,T;L^{2}(\Omega ))}^{2}+\gamma \Vert v\Vert
  _{L^{2}(\Omega _{T})}^{2}+\Vert \partial _{t}v\Vert _{L^{2}(\Omega
  _{T})}^{2}\leq C_{0}^{\prime }  \notag
\end{equation}
for some constant $C_{0}^{\prime }$ which depends only on $\Vert v_{0}\Vert
_{L^{2}(\Omega )},$ $\Vert G\Vert _{L^{2}(\Omega _{T})}$ and $K_{F}$ defined in
\cref{cor:FEstimates}
\end{theorem}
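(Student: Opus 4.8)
The plan is to read \cref{nu3} as an abstract ordinary differential equation in the Hilbert space $H := L^2(\Omega)$, driven by the bounded, globally Lipschitz nonlinearity $F$ of \cref{cor:FEstimates}, solve it by a contraction argument on $C([0,T];H)$, and then obtain the claimed regularity and the energy bound by testing the equation against $v$.

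\emph{Fixed-point construction.} Given $u \in C([0,T];H)$, the datum $f_u := F(u(\blank)) + G$ lies in $L^2(0,T;H)$, by the boundedness statement of \cref{cor:FEstimates} and the assumption $G \in L^2(\Omega_T)$. I then define $\mathcal{T}u := v$ as the solution of the linear inhomogeneous Cauchy problem $\partial_t v = -\gamma v + f_u$, $v(0)=v_0$, which can be written via Duhamel's formula
\[
  v(t) = e^{-\gamma t} v_0 + \int_0^t e^{-\gamma(t-s)} f_u(s)\, ds ;
\]
expanding on an orthonormal basis of $H$ and applying \cref{the_1 copy(1)} coefficient-wise (the abstract analogue of that lemma) shows $v$ is well defined, $v \in H^1(0,T;H) \hookrightarrow C([0,T];H)$, and $\mathcal{T}$ maps $C([0,T];H)$ into itself. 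Using the Lipschitz bound of \cref{cor:FEstimates} and $|e^{-\gamma(t-s)}| \le 1$,
\[
  \| (\mathcal{T}u_1)(t) - (\mathcal{T}u_2)(t) \|_{H} \le K_F \int_0^t \| u_1(s) - u_2(s) \|_{H}\, ds ,
\]
so by induction the $m$-fold composite $\mathcal{T}^m$ satisfies a contraction estimate with constant $(K_F T)^m/m! \to 0$; hence $\mathcal{T}$ has a unique fixed point $v$ in $C([0,T];H)$. Since any solution in the class $L^2(\Omega_T)\cap H^1(0,T;H)$ embeds in $C([0,T];H)$ and is a fixed point of $\mathcal{T}$ by the linear theory, this is \emph{the} unique solution, giving \cref{eq:emb_1Nik} (membership in $L^2(\Omega_T)$ being automatic from $H^1(0,T;H)\subset L^2(0,T;H)$).

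\emph{Energy estimate.} Since $v \in H^1(0,T;H)$, the map $t \mapsto \|v(t)\|_{H}^2$ is absolutely continuous with $\tfrac12\tfrac{d}{dt}\|v\|_H^2 = (\partial_t v, v)$ (a standard fact, e.g. \cite{Evans}), so pairing \cref{nu3} with $v$ in $H$, using $\|F(v)\|_H \le K_F$, Cauchy--Schwarz and Young's inequality,
\[
  \tfrac{1}{2}\tfrac{d}{dt}\|v\|_{H}^2 + \gamma \|v\|_{H}^2 = (F(v),v) + (G,v) \le \tfrac{1}{2}K_F^2 + \tfrac{1}{2}\|G(t)\|_{H}^2 + \|v\|_{H}^2 .
\]
Grönwall's lemma then bounds $\|v\|_{L^\infty(0,T;H)}^2$ and, after integrating in $t$, $\gamma\|v\|_{L^2(\Omega_T)}^2$, by a constant depending only on $\|v_0\|_{L^2(\Omega)}$, $\|G\|_{L^2(\Omega_T)}$ and $K_F$ (and the fixed $T,\gamma$); the bound on $\|\partial_t v\|_{L^2(\Omega_T)}^2$ is then read off the equation, $\|\partial_t v\|_{L^2(\Omega_T)} \le \gamma\|v\|_{L^2(\Omega_T)} + K_F T^{1/2} + \|G\|_{L^2(\Omega_T)}$.

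\emph{Main difficulty.} There is no serious obstacle: the whole argument works globally in time, without any largeness assumption on $\gamma$, precisely because \cref{cor:FEstimates} gives that $F$ is \emph{bounded} and \emph{globally} Lipschitz on $H$. The two points that do require care are (i) recording that the linear solution operator $f \mapsto v$ indeed lands in $H^1(0,T;H)$ with the corresponding estimates --- this is the $H$-valued version of \cref{the_1 copy(1)}, and should be stated as such --- and (ii) the validity of the chain rule $\tfrac12\tfrac{d}{dt}\|v\|_H^2 = (\partial_t v, v)$ for $v \in L^2(0,T;H)\cap H^1(0,T;H)$, which is standard but must be cited to make the energy estimate rigorous.
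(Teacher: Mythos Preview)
Your proof is correct and follows the same overarching strategy as the paper (Banach fixed point for existence, Gronwall/energy for uniqueness and the estimate), but differs in two implementation choices worth highlighting. First, the paper applies \cref{the_1 copy(1)} \emph{pointwise} in $(x,\xi)\in\Omega$ to the scalar ODE $\frac{d}{dt}v(x,\xi,t)=-\gamma v + f$, and then integrates the resulting estimate over $\Omega$; you instead work from the start in the abstract Hilbert space $H=L^{2}(\Omega)$ via Duhamel's formula. Second, the paper obtains a contraction only on a short interval $[0,\tau]$ with $\tau C_*<1$ and then continues the solution in steps of size $\tau$ to reach $[0,T]$, whereas you bypass this continuation entirely by using the iterated-map estimate $\|\mathcal{T}^m u_1-\mathcal{T}^m u_2\|\le (K_F T)^m/m!\,\|u_1-u_2\|$, which gives a contraction for some iterate directly on the full interval. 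Your route is slightly more economical (no bootstrapping, and the energy estimate uses only the global boundedness of $F$ rather than pointwise ODE bounds); the paper's pointwise approach, on the other hand, makes the parallel with the subsequent $\nu>0$ argument in \cref{the_2} more transparent, since both proofs then follow an identical local-contraction-then-continue template.
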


\begin{proof}
In order to prove the solvability of problem \cref{nu3} we use  Banach's fixed point
theorem \cite{Evans}. For some constants $\tau \in [0,T]$, and $M_\tau > 0$ let us
define the set 
\begin{equation}
\mathcal{B}_{\tau}=\{u\in L^{\infty }(0,\tau;L^{2}(\Omega )) \colon \Vert
u\Vert _{L^{\infty }(0,\tau;L^{2}(\Omega ))}\leq M_\tau\}  \label{nu4}
\end{equation}
and consider the operator
\[
  P \colon L^{\infty }(0,\tau;L^{2}(\Omega))\rightarrow L^{\infty }(0,\tau;L^{2}(\Omega
  )), \qquad u \mapsto v,
\]
where $v=v(x,\xi ,t)$ is the solution of the ODE problem \cref{nu0} with the
right hand side $f$ and the initial data $g$, defined by 
\begin{equation}\label{fg0}
f(x,\xi ,t)=G(x,\xi ,t)+F(u)(x,\xi ,t), \qquad  g(x,\xi
)=v_{0}(x,\xi )\quad \text{for }(x,\xi ,t)\in \Omega _{\tau}. 
\end{equation}
It is important to stress that the ODE so defined has solutions $t \mapsto v(x,\xi,t)$ in which
$(x,\xi)$ are mere parameters, and in which $u$ is an input. Further, fixed points of
$P$ are solutions to the singular neural field equation \cref{eq:NFAbstract}.

We first show that $P \colon \calB_\tau \to \calB_\tau$, is a contraction provided $\tau$ and $M_\tau$
are suitably chosen. Pick $\tau$, $M_\tau$, $u \in \calB_\tau$.
Applying \cref{the_1 copy(1)} to the ODE \cref{nu0} with functions $f$ and $g$
 specified in \cref{fg0}, we deduce the existence of a unique solution $v=v(x,\xi
 ,t)$ of (\ref{nu0}) dependent on parameters $(x,\xi )\in \Omega $, such that, for
 almost every $(x,\xi )\in \Omega$ 
 \[
 \begin{aligned}
  \Vert v(x,\xi )\Vert _{L^{\infty }(0,\tau)}^{2} 
  &+\gamma \Vert v(x,\xi )\Vert _{L^{2}(0,\tau)}^{2} 
  +\Vert \partial _{t}v(x,\xi )\Vert _{L^{2}(0,\tau)}^{2} \\
 &\leq C\left( |v_{0}(x,\xi )|^{2}+\Vert G(x,\xi )\Vert
 _{L^{2}(0,\tau)}^{2}+\Vert F(u)(x,\xi )\Vert _{L^{2}(0,\tau)}^{2}\right). \qquad 
 \end{aligned}%
 \]
 Integrating this inequality over $(x,\xi )\in \Omega $, using
 \cref{cor:FEstimates} we obtain
 \[
 \begin{aligned}
 \Vert v\Vert _{L^{\infty }(0,\tau;L^{2}(\Omega ))}^{2}
     +\gamma \Vert v\Vert _{L^{2}(\Omega _{\tau})}^{2} 
    & +\Vert \partial _{t}v\Vert _{L^{2}(\Omega _{\tau})}^{2} \\
    & \leq C(\Vert v_{0}\Vert _{L^{2}(\Omega )}^{2} 
    + \Vert G\Vert _{L^{2}(\Omega _{\tau})}^{2}+K^2_{F})=:C_{0}^{\prime }.
  \end{aligned}%
 \]
 Therefore setting $M_\tau=C_{0}^{\prime }$ in (\ref{nu4}) we have $P \colon \calB_\tau
 \to \calB_\tau$. We then show that $P$ is a contraction
 for small enough $\tau$. Let $u_{1}, u_{2}\in \mathcal{B}_{\tau}$ and
 $v_{i}=P(u_{i}),$ $i=1,2,$ be solutions
of \cref{nu0} with right hand side $f_{i}$ and initial data $g$ defined by
\[
  f_{i}(x,\xi ,t)=G(x,\xi ,t)+F(u_{i})(x,\xi ,t),\qquad 
  g(x,\xi )=v_{0}(x,\xi ),
  \qquad (x,\xi ,t)\in \Omega _{\tau},
\]
respectively. Since $v_{1}$ and $v_{2}$ satisfy the ODE \cref{nu0}, with data defined
above, the difference $z=v_{1}-v_{2}$ satisfies the inequality
\[
  \begin{aligned}
    \frac{d}{dt}|z|^{2} +\gamma |z|^{2}=(F(u_{1})&-F(u_{2}))z \\
       & \leq \frac{1}{2\gamma}|F(u_{1})-F(u_{2})|^{2}+\frac{\gamma }{2}|z|^2 
  \qquad \text{for a.e. $(x,\xi ,t) \in \Omega _{\tau}$.}
  \end{aligned}
\]
In the last passage, we have multiplied the ODE by $z$, and used the inequality $2\alpha 
\beta = 2(\alpha \epsi) (\beta/\epsi)\leq (\alpha\epsi)^2 + (\beta/\epsi)^2$ for
$\alpha,\beta,\epsi \in \RSet$, with
$\gamma = 1/\epsi^{2}$. We now neglect the terms proportional to $\gamma z^2$,
integrate over $\Omega$ with respect to $(x,\xi)$, over $(0,\tau)$ with respect to
$t$, and use \cref{cor:FEstimates} to obtain
\begin{equation*}
\Vert z(t)\Vert _{L^{2}(\Omega )}^{2}
\leq C_{\ast }\int_{0}^{t} \Vert u_1(s) - u_2(s)\Vert_{L^{2}(\Omega )}^{2}ds
\leq C_{\ast }t\Vert u_1 - u_2 \Vert _{L^{\infty}(0,T;L^{2}(\Omega ))}^{2},\quad t\in
(0,\tau),
\end{equation*}
for some constant $C_*$. Selecting $\tau$ such that $\tau C_{\ast }<1,$ we obtain 
\[
\| P(u_1) - P(u_2) \|_{L^\infty(0,\tau;L^2(\Omega))} 
  = \| z \|_{L^\infty(0,\tau;L^2(\Omega))}  
  <  \| u_1 - u_2 \|_{L^\infty(0,\tau;L^2(\Omega))},
\]
hence $P \colon \calB_\tau \to \calB_\tau$ is a contraction and by Banach's fixed
point theorem admits a fixed point $v=P(v)$, that is, the solution of \cref{nu3} on the
interval $[0,\tau]$. Studying the differential equation \cref{nu3} on the time
interval $[\tau,2\tau]$, with the condition $v_0 = v(\tau)$ one proves the existence
of a solution of system \cref{nu3} on the time interval $[\tau,2\tau]$. This
process can be continued to extend the solution on $[0,T]$, satisfying the estimate
\cref{eq:nu3Est}. 

Finally, we show uniqueness of the solution. If there exist two different solutions
$v_{1}$ and $v_{2},$ then the difference $v=v_{1}-v_{2}$ satisfies the problem
\[
\begin{aligned}
 &\partial _{t}v =-\gamma v+F(v_{1})-F(v_{1}) && \text{on $\Omega_{T} = \Omega \times (0,T]$}, \\
 & v=0 && \text{on $\Omega \times \{ t = 0 \}$},
\end{aligned}%
\]
Hence we have 
\[
\frac{d}{dt}|v|^{2} + \gamma |v|^{2} = (F(v_{1})-F(v_{2}))v \leq
\frac{1}{2\gamma}|F(v_{1})-F(v_{2})|^{2}+\frac{\gamma }{2}|v|^2, \qquad  \text{a.e. $(x,\xi,t)\in \Omega _{T}$}.
\]
Integrating and using \cref{cor:FEstimates} we deduce
\begin{equation*}
\Vert v(t) \Vert _{L^{2}(\Omega )}^{2}\leq C \int_{0}^{t} \Vert v(s)\Vert
_{L^{2}(\Omega )}^{2}ds\quad \text{for }t\in (0,T),
\end{equation*}
and Gronwall's inequality gives $v(t)=v_{1}(t)-v_{2}(t)\equiv 0$. 
\end{proof}

\section{Existence of solution to the regular problem \texorpdfstring{$\nu>0$}{}}
\label{sec:nuGeq0}

To make progress towards studying solutions of the regular problem, we present a
result from the theory of parabolic equations. Consider the following auxiliary
linear inhomogeneous problem for unknown functions $z(x,\xi,t)$, with $z \colon \Omega
\times [0,T] \to \RSet$,
\begin{equation}\label{heat}
  \begin{aligned}
  & \partial_t z = \epsi \partial_x^2 z + \nu \partial_{\xi}^{2} z - \gamma z + f 
    && \text{on $\TSet^n \times U \times (0,T]$,} \\
  & \partial_\xi z = 0 
  && \text{on $\TSet^n \times \partial U \times [0,T]$,} \\
  & z = g
    && \text{on $\TSet^n \times U    \times \{ t = 0 \}$}. \\
  \end{aligned}
\end{equation}

The following solvability result is obtained by the classical theory presented in
\cite[page 378, Theorem 3] {Evans}, \cite[page 372, Theorem 3]{Mih}, and we omit its
proof for brevity. 
\begin{lemma} \label{the_1}
  Fix $\epsi, \nu, \gamma>0$, $g\in L^{2}(\Omega)$ and $f\in L^{2}(\Omega_{T})$. The weak
  formulation of the linear inhomogeneous, anisotropic heat equation \cref{heat}, 
  \begin{equation} \label{eq:heatWeak}
  \begin{aligned}
   & 
   \begin{aligned}[b]
   \langle \partial _{t}z,\varphi \rangle 
   + \epsi (\partial _{x }z,\partial _{x}\varphi) 
   &+ \nu (\partial _{\xi }z,\partial _{\xi }\varphi) \\
   &+\gamma (z,\varphi ) 
   = (f,\varphi), 
      \quad
   \varphi \in H^{1}(\Omega), 
   \end{aligned}
  &&\textrm{a.e. in $(0,T)$}, 
\\
   & z(0) = g, 
   &&\text{on $U$}, 
  \end{aligned}
  \end{equation} 
  where $\langle \psi,\varphi \rangle$ denotes the duality pairing between $\psi \in
   H^{1}(\Omega)^{\ast }$ and $\varphi \in H^{1}(\Omega)$, admits a
  unique solution satisfying
  \[
  z \in L^{2}(0,T;H^{1}(\Omega))\cap H^{1}(0,T; H^{1}(\Omega)^{\ast})\hookrightarrow C([0,T];L^{2}(\Omega)),
  \]
  and the estimate
  \[ \label{embeding1}
  \begin{aligned}
  \Vert z\Vert_{L^{\infty }(0,T;L^{2}(\Omega))}^{2}
  &+\epsi \Vert \partial _{x }z\Vert _{L^{2}(\Omega_{T})}^{2}
  +\nu \Vert \partial _{\xi }z\Vert _{L^{2}(\Omega_{T})}^{2}
  \\
  &+\gamma \Vert z\Vert _{L^{2}(\Omega_{T})}^{2}
  +\Vert \partial _{t}z\Vert _{L^{2}(0,T; H^{1}(\Omega)^{\ast })}^{2} 
  \leq C\left( \Vert g \Vert _{L^{2}(\Omega)}^{2}+\Vert f\Vert _{L^{2}(\Omega_{T})}^{2}\right) 
  \end{aligned}%
  \]
  where $C >0$ is a constant depending solely on $T$ and on the size $L$ of the
  interval $U$.
\end{lemma}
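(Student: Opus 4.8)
The result is a classical statement on linear parabolic equations, and the plan is to prove it by the Galerkin method, exactly as in \cite[\S7.1]{Evans} and \cite{Mih}; the only mild novelty is the anisotropy of the elliptic part, which is harmless because $\epsi$, $\nu$ and $\gamma$ are fixed and strictly positive. Introduce the time–independent bilinear form
\[
  B[z,\varphi] = \epsi(\partial_x z,\partial_x\varphi) + \nu(\partial_\xi z,\partial_\xi\varphi) + \gamma(z,\varphi), \qquad z,\varphi\in H^1(\Omega),
\]
which is plainly bounded on $H^1(\Omega)\times H^1(\Omega)$ and, on the torus--interval geometry $\Omega=\TSet^n\times U$ (periodic in $x$, Neumann in $\xi$), satisfies $B[z,z]\geq \min(\epsi,\nu,\gamma)\,\|z\|_{H^1(\Omega)}^2$, hence is coercive. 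Thus \cref{heat} is a standard Cauchy problem $\partial_t z + Lz = f$, $z(0)=g$, with $L$ a uniformly elliptic (albeit anisotropic) second–order operator, and the abstract machinery applies verbatim.

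First I would construct Galerkin approximants. Take $\{w_k\}_{k\in\mathbb N}$, the basis of $L^2(\Omega)$ given by the eigenfunctions of $-\Delta+1$ with periodic conditions in $x$ and homogeneous Neumann conditions in $\xi$ — explicitly, tensor products of Fourier modes on $\TSet^n$ with cosines $\cos(k\pi\xi/L)$ on $U$ — which is orthonormal in $L^2(\Omega)$ and orthogonal in $H^1(\Omega)$. For each $m$ seek $z_m(t)=\sum_{k=1}^m d_m^k(t)\,w_k$ solving the finite linear ODE system $(\partial_t z_m,w_k)+B[z_m,w_k]=(f,w_k)$, $1\le k\le m$, with $z_m(0)$ the $L^2$–projection of $g$ onto $\operatorname{span}\{w_1,\dots,w_m\}$; solvability with $d_m\in H^1(0,T;\RSet^m)$ follows from \cref{the_1 copy(1)} applied componentwise (or Carathéodory theory). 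Multiplying by $d_m^k$ and summing gives the energy identity
\[
  \tfrac12\tfrac{d}{dt}\|z_m\|_{L^2(\Omega)}^2 + \epsi\|\partial_x z_m\|_{L^2(\Omega)}^2 + \nu\|\partial_\xi z_m\|_{L^2(\Omega)}^2 + \gamma\|z_m\|_{L^2(\Omega)}^2 = (f,z_m) \leq \tfrac12\|f\|_{L^2(\Omega)}^2 + \tfrac12\|z_m\|_{L^2(\Omega)}^2,
\]
and Gronwall's inequality together with $\|z_m(0)\|_{L^2(\Omega)}\le\|g\|_{L^2(\Omega)}$ bounds $z_m$ in $L^\infty(0,T;L^2(\Omega))\cap L^2(0,T;H^1(\Omega))$; splitting a test function $\varphi\in H^1(\Omega)$ into its projection onto $\operatorname{span}\{w_1,\dots,w_m\}$ and the orthogonal remainder, and using $\langle\partial_t z_m,\varphi\rangle=(f,\varphi_{\le m})-B[z_m,\varphi_{\le m}]$, bounds $\partial_t z_m$ in $L^2(0,T;H^1(\Omega)^*)$.

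Next I would pass to the limit: weak-$*$ and weak compactness extract a subsequence with $z_m\wto z$ in $L^2(0,T;H^1(\Omega))$ and $\partial_t z_m\wto \partial_t z$ in $L^2(0,T;H^1(\Omega)^*)$ (Aubin--Lions additionally gives $z_m\to z$ in $L^2(\Omega_T)$, though for this linear problem weak convergence already suffices). Fixing a basis index and a scalar time–test function, one passes to the limit in the Galerkin identity, and density of $\bigcup_m\operatorname{span}\{w_1,\dots,w_m\}$ in $H^1(\Omega)$ shows $z$ solves \cref{eq:heatWeak} a.e.\ in $(0,T)$. The membership $z\in L^2(0,T;H^1(\Omega))\cap H^1(0,T;H^1(\Omega)^*)$ implies, by the Lions--Aubin interpolation lemma \cite[\S5.9.2]{Evans}, that $z\in C([0,T];L^2(\Omega))$; this makes $z(0)=g$ meaningful and it is verified by integration by parts in time against the Galerkin equations. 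For the quantitative estimate I would test the weak form with $\varphi=z(t)$ — legitimate by the chain rule $\langle\partial_t z,z\rangle=\tfrac12\tfrac{d}{dt}\|z\|_{L^2(\Omega)}^2$ valid under that embedding — integrate over $(0,t)$, absorb $(f,z)$ via Young's inequality, take suprema in $t$ for the $L^\infty L^2$ term, integrate to $T$ for the $\epsi\|\partial_x z\|^2$, $\nu\|\partial_\xi z\|^2$ and $\gamma\|z\|^2$ terms, and read the bound on $\|\partial_t z\|_{L^2(0,T;H^1(\Omega)^*)}$ off the equation itself; tracking constants, they reduce to powers of $e^{T}$ and to Poincaré/embedding constants on $\Omega=\TSet^n\times U$, hence depend only on $T$ and $L$. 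Uniqueness is immediate: the difference $w$ of two solutions solves the homogeneous problem with $w(0)=0$, and testing with $w$ gives $\tfrac{d}{dt}\|w\|_{L^2(\Omega)}^2 + 2\gamma\|w\|_{L^2(\Omega)}^2 \le 0$, so $w\equiv 0$.

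There is no genuinely hard step here; the only points requiring a little care are ensuring the anisotropy does not spoil coercivity (it does not, since $\epsi,\nu,\gamma>0$ are fixed) and verifying that the final constant depends only on $T$ and $L$ in the displayed combination — both are routine precisely because the zeroth–order term $\gamma\|z\|^2$ already controls the $L^2$ part of the $H^1$ norm, making the estimates uniform without invoking any Poincaré inequality with a delicate constant.
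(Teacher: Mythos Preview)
Your proposal is correct and follows precisely the classical Galerkin approach of \cite[\S7.1]{Evans} and \cite{Mih}, which is exactly what the paper invokes: the paper does not give its own proof of this lemma but simply cites those references and omits the argument for brevity. Your sketch therefore supplies more detail than the paper itself, and the one point worth flagging---that the constant $C$ must be independent of $\epsi$ and $\nu$ so that the subsequent limit $\epsi\to0$ in \cref{the_1_x} goes through---is handled by your energy estimate, since the coefficients $\epsi,\nu,\gamma$ appear on the left-hand side multiplying the respective gradient and $L^2$ norms rather than in the Gronwall constant.
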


The proof of existence and uniqueness of weak solutions to \cref{eq:model}, relies on
the following auxiliary  result, which characterizes weak solutions to problem
\cref{heat} with $\epsi=0$, in which the dependence on $x$ in the forcing $f$ and
initial condition $g$ is retained.

\begin{lemma} \label{the_1_x} 
  For any given $\nu,\gamma >0$, $g\in L^{2}(\Omega)$ and $f\in L^{2}(\Omega_T)$
  there exists a unique $v$ satisfying the $\epsi =0$ weak form of \cref{heat}, namely
  \begin{equation}\label{eq:heatWeakX}
  \begin{aligned}
   & 
   \begin{aligned}[b]
   \langle \partial _{t}v,\varphi \rangle 
    + \nu (\partial _{\xi }v,\partial _{\xi }\varphi) 
   &+ \gamma (v,\varphi ) \\
   &= (f,\varphi ), 
      \quad \varphi \in L^2(\TSet^n) \times H^{1}(U), 
  \end{aligned}
   &&\textrm{a.e. in $(0,T)$}, 
   \\
   & z(0) = g 
   &&\text{on $\TSet^n \times U$}, 
  \end{aligned}%
  \end{equation}
  where $\langle \psi,\varphi \rangle$ denotes the duality pairing between $\psi \in 
  L^2(\TSet^n) \times \left(H^{1}(U)\right)^{\ast}$ and $\varphi \in L^2(\TSet^n) \times H^{1}(U)$.
  In addition
  \begin{equation}\label{eq:emb_1}
    \begin{aligned}
      v \in L^{2}(0,T;L^2(\TSet^n) \times H^1(U))) 
      \cap H^{1}(0,T;L^2(\TSet^n) 
      & \times H^1(U)^*)  \\ 
      & \hookrightarrow C([0,T];L^2(\Omega)),
    \end{aligned}
\end{equation}
  and the following estimate holds
  \begin{equation}\label{embeding1X} 
  \begin{aligned}
  \Vert v\Vert _{L^{\infty }(0,T;L^{2}(\Omega))}^{2} 
  &+\nu \Vert \partial _{\xi }v\Vert _{L^{2}(\Omega_{T})}^{2}
    +\gamma \Vert v\Vert _{L^{2}(\Omega_{T})}^{2}
    +\Vert \partial _{t}v\Vert _{L^{2}(0,T;L^2(\TSet)^n \times  H^{1}(U)^{\ast })}^{2} 
    \\
  &\leq C\left( \Vert g \Vert _{L^{2}(\Omega)}^{2}+\Vert f\Vert _{L^{2}(\Omega_\tau)}^{2}\right),
  \end{aligned}
  \end{equation}
  where $C >0$ is a constant depending solely on $T$, and on the size $L$ of the interval $U$.
\end{lemma}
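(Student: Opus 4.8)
The plan is to read \cref{eq:heatWeakX} as an abstract parabolic Cauchy problem driven by a bounded, coercive, time-independent bilinear form, and to invoke the classical Lions--Lax--Milgram existence theory for such equations, exactly as was done for \cref{the_1}, but over a Gelfand triple that encodes the loss of regularity in the $x$-direction caused by setting $\epsi=0$. Concretely, set $V := L^2(\TSet^n; H^1(U))$, $H := L^2(\Omega)$ and $V' := L^2(\TSet^n; H^1(U)^*)$ --- these are the spaces written $L^2(\TSet^n)\times H^1(U)$, $L^2(\Omega)$ and $L^2(\TSet^n)\times H^1(U)^*$ in the statement --- and observe that $(V,H,V')$ form a Gelfand triple: $V$ is dense in $H$ because $H^1(U)$ is dense in $L^2(U)$, and the embeddings are continuous. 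On $V$ define $a(u,\varphi) := \nu(\partial_\xi u,\partial_\xi\varphi)+\gamma(u,\varphi)$. A Cauchy--Schwarz estimate gives boundedness, $|a(u,\varphi)|\le\max(\nu,\gamma)\|u\|_V\|\varphi\|_V$, and --- crucially using $\gamma>0$ --- coercivity, $a(u,u)=\nu\|\partial_\xi u\|_{L^2(\Omega)}^2+\gamma\|u\|_{L^2(\Omega)}^2\ge\min(\nu,\gamma)\|u\|_V^2$. Coercivity fails when $\gamma=0$, since functions independent of $\xi$ lie in the kernel of $a$; this is exactly why $\gamma>0$ is assumed, here and in \cref{the_1}. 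The only structural difference from \cref{the_1} is that the form is now coercive on the larger space $V$ rather than on $H^1(\Omega)$, so the solution and its time derivative live in correspondingly larger spaces.

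With this setting in place, since $g\in H$ and $f\in L^2(\Omega_T)=L^2(0,T;H)\hookrightarrow L^2(0,T;V')$, the classical theory of abstract parabolic equations (in the form used for \cref{the_1}, cf.\ \cite{Evans,Mih}; the construction is Galerkin approximation plus energy estimates plus weak compactness) supplies a unique $v\in L^2(0,T;V)$ with $\partial_t v\in L^2(0,T;V')$ satisfying $\langle\partial_t v,\varphi\rangle+a(v,\varphi)=(f,\varphi)$ for all $\varphi\in V$, a.e.\ in $(0,T)$, together with $v(0)=g$; this is precisely \cref{eq:heatWeakX}. Testing with $\varphi=v(t)$, integrating over $(0,t)$, using coercivity and absorbing $(f,v)$ with Young's inequality yields the bound on the first three terms of \cref{embeding1X}; the dual-norm term follows from $\|\partial_t v(t)\|_{V'}\le\|f(t)\|_H+\max(\nu,\gamma)\|v(t)\|_V$, squared and integrated in time, with the resulting constant depending only on $T$ (and on $L$ through the embedding constants). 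The embedding \cref{eq:emb_1} into $C([0,T];H)$ is the Lions--Magenes time-continuity lemma for Gelfand triples, which in particular gives meaning to the initial condition. Uniqueness is immediate: the difference of two solutions solves the weak form with $f=0$ and $g=0$, and testing with $\varphi=v(t)$ gives $\frac{1}{2}\frac{d}{dt}\|v(t)\|_H^2\le0$ with $v(0)=0$, so $v\equiv0$.

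An equivalent, more hands-on route --- parallel to the treatment of \cref{the_2 copy(1)} --- is to treat $x\in\TSet^n$ as a parameter: by Fubini, $f(x,\blank,\blank)\in L^2(U_T)$ and $g(x,\blank)\in L^2(U)$ for a.e.\ $x$, and the classical one-dimensional heat equation on $U$ with Neumann boundary conditions (covered by the same parabolic theory \cite{Evans,Mih}) furnishes $v(x,\blank,\blank)$ together with the pointwise-in-$x$ analogue of the energy estimate; integrating that estimate over $\TSet^n$ recovers \cref{embeding1X}, while testing the pointwise-in-$x$ weak form against functions of tensor-product form $\alpha(x)\beta(\xi)$, integrating over $\TSet^n$, and using density of finite sums $\sum_i\alpha_i\beta_i$ in $V$ recovers \cref{eq:heatWeakX}. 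The one step in this route that is not routine is the strong measurability of $x\mapsto v(x,\blank,\blank)$ with values in $L^2(0,T;H^1(U))\cap H^1(0,T;H^1(U)^*)$, which I would obtain from the boundedness and linearity of the data-to-solution map together with the strong measurability of $x\mapsto(g(x,\blank),f(x,\blank,\blank))$. I expect this measurability check to be the only genuine obstacle; since the abstract argument of the first two paragraphs avoids it altogether, that is the version I would write up.
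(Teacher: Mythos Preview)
Your argument is correct but follows a different route from the paper's. The paper obtains \cref{eq:heatWeakX} as the \emph{vanishing-viscosity limit} $\epsi\to0$ of \cref{the_1}: since the constant $C$ in that lemma's estimate is independent of $\epsi$, the family $\{v_\epsi\}_{\epsi>0}$ is uniformly bounded, and weak(-$*$) compactness yields a subsequence with $v_\epsi\rightharpoonup v$ in $L^\infty(0,T;L^2(\Omega))$, $\partial_\xi v_\epsi\rightharpoonup\partial_\xi v$ and $\epsi\,\partial_x v_\epsi\rightharpoonup 0$ in $L^2(\Omega_T)$, and $\partial_t v_\epsi\rightharpoonup\partial_t v$ in $L^2(0,T;H^1(\Omega)^*)$; one then passes to the limit in \cref{eq:heatWeak} and invokes weak lower semicontinuity of norms for \cref{embeding1X}. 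Your direct application of Lions' theory to the anisotropic Gelfand triple $V=L^2(\TSet^n;H^1(U))\hookrightarrow L^2(\Omega)\hookrightarrow V'$ is more self-contained---it does not need \cref{the_1} as an intermediary and delivers existence, uniqueness, regularity and the estimate in one pass---whereas the paper's route reuses \cref{the_1} as a black box and thematically anticipates the $\nu\to0$ limit of \cref{sec:contDepend,sec:convergenceRate}. One small caveat on your write-up: the bound $\|\partial_t v(t)\|_{V'}\le\|f(t)\|_H+\max(\nu,\gamma)\|v(t)\|_V$ produces a constant depending on $\nu,\gamma$, not only on $T$ and $L$ as you (and the lemma) assert; this discrepancy is equally present in the paper's statement and is harmless downstream, where $\nu,\gamma$ are fixed, but it is worth flagging.
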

\begin{proof}
  For fixed $\epsi>0$, let $v_{\epsi}$ be the unique weak solution to
  \cref{eq:heatWeak}. The statement is proved by studying the
  $\epsi \to 0$ limit of $v_\epsi$. In passing, we note that the constant $C$ in the
  estimate of \cref{the_1} is independent of $\epsi$ (and $\nu$), and that the initial
  condition is well defined by the embedding results presented in \cite[Theorems 2,
  3, pages 302--303]{Evans}. Owing to the estimate in \cref{the_1}, there exists a
  sequence $\{ v_\epsi \}_{\epsi > 0}$ and a function $v$ such that
  \[
    \begin{aligned}
      v_\epsi & \wto v 
      && \text{$*$-weakly in $L^\infty(0,T;L^2(\Omega))$ as $\epsi \to 0$,} \\
      v_\epsi, \epsi\partial_x v_\epsi, \partial_\xi v_\epsi & \wto v_\epsi, 0, \partial_\xi v 
      &&\text{weakly in $L^2(\Omega_T)$ as $\epsi \to 0$,} \\
      \partial_t v_\epsi & \wto \partial_t v
      && \text{weakly in $L^2(0,T;H^1(\Omega)^*)$ as $\epsi \to 0$,}
    \end{aligned}
  \]
  respectively, and using these convergence results we obtain that the limit function
  $v$ satisfies \cref{eq:heatWeakX}. Moreover the weak semi-continuity property of 
  integral
 and the estimate in \cref{the_1} give that $v$ satisfies \cref{embeding1X}.
  Finally, since $v$ is the unique solution to \cref{eq:heatWeakX}, we deduce that
  $v$ has the regularity \cref{embeding1X}. 
\end{proof}

After these preliminaries, we can address the well-posedness of the weak problem for the
neural field with anisotropic diffusion, system \cref{eq:model}, when $ \nu > 0$.
\begin{theorem}[Weak solution, $\nu > 0$]\label{the_2}
 Assume \cref{hyp:general}, and fix $\nu >0$, $\gamma \geq 0$,
 $v_0 \in L^2(\Omega)$, and $G \in L^2(\Omega_T)$. 
  There exists a unique $v$ satisfying the regularity condition \cref{eq:emb_1} such
  that
  \begin{equation}\label{eq:NFWeak}
  \begin{aligned}
   & 
   \begin{aligned}[b]
   \langle \partial _{t}v,\varphi \rangle +
     & \nu (\partial _{\xi }v,\partial _{\xi }\varphi)
      +\gamma (v,\varphi ) \\
     &= (F(v)+G,\varphi ), 
     \qquad 
     \varphi \in L^{2}(\TSet^n) \times H^{1}(U), 
   \end{aligned}
   && \textrm{a.e. in $(0,T)$},
    \\
   & v(0) =v_0,
   &&\text{on $\TSet^n \times U$}.
  \end{aligned}
  \end{equation}
  In addition, there exists a constant $C_0$ depending on $\| G \|_{L^2(\Omega_T)}$,
  $\| v_0 \|_{L^2(\TSet^n \times U)}$, and on the constant $K_F$ defined in
  \cref{cor:FEstimates}, such that
  \[\label{est}
  \Vert v\Vert _{L^{\infty }(0,T;L^{2}(\Omega ))}^{2}
  +\nu \Vert \partial _{\xi} v \Vert _{L^{2}(\Omega _{T})}^{2}
  +\gamma \Vert v\Vert _{L^{2}(\Omega_{T})}^{2}
  +\Vert \partial _{t}v\Vert _{L^{2}(0,T;L^{2}(\mathbb{T}^{n})\times H^{1}(U)^{\ast })}^{2}\leq C_{0}.
  \]
\end{theorem}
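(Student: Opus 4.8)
The plan is to prove \cref{the_2} by a Banach fixed-point argument, in the same spirit as the proof of \cref{the_2 copy(1)}, but now using the linear parabolic solvability result \cref{the_1_x} in place of the pointwise-in-$(x,\xi)$ ODE result \cref{the_1 copy(1)}. Concretely, I would fix a small time $\tau\in(0,T]$ and a radius $M_\tau>0$, and work on the complete metric space $\mathcal B_\tau = \{ u\in L^\infty(0,\tau;L^2(\Omega)) \colon \|u\|_{L^\infty(0,\tau;L^2(\Omega))}\le M_\tau\}$ with the $L^\infty(0,\tau;L^2(\Omega))$ metric (closed balls in this space are complete). Define the solution operator $P\colon u\mapsto v$, where $v$ is the unique weak solution, supplied by \cref{the_1_x}, of the linear anisotropic heat problem \cref{eq:heatWeakX} with forcing $f = F(u)+G$ and initial datum $g=v_0$. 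By construction, a fixed point of $P$ is exactly a weak solution of \cref{eq:NFWeak} on $[0,\tau]$, with the claimed regularity \cref{eq:emb_1} inherited from \cref{the_1_x}.

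The two things to check are that $P$ maps $\mathcal B_\tau$ into itself and that it is a contraction for $\tau$ small. For the self-map property, I would apply the estimate \cref{embeding1X} with $f=F(u)+G$; using \cref{cor:FEstimates} to bound $\|F(u)\|_{L^2(\Omega_\tau)}^2 \le K_F^2\,\tau$ and the triangle inequality, the right-hand side is $\le C(\|v_0\|_{L^2(\Omega)}^2 + 2\|G\|_{L^2(\Omega_T)}^2 + 2K_F^2 T)=:C_0$, so setting $M_\tau = C_0^{1/2}$ (independently of $\tau$) gives $P(\mathcal B_\tau)\subseteq\mathcal B_\tau$. For the contraction property, let $u_1,u_2\in\mathcal B_\tau$, $v_i=P(u_i)$, and $z=v_1-v_2$; then $z$ solves the linear problem \cref{eq:heatWeakX} with zero initial datum and forcing $F(u_1)-F(u_2)$. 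Testing with $\varphi=z(t)$ (legitimate by the embedding $z\in C([0,\tau];L^2(\Omega))$ and a standard density/Lions–Magenes argument for $\frac{d}{dt}\|z\|_{L^2}^2 = 2\langle\partial_t z,z\rangle$), dropping the nonnegative $\nu\|\partial_\xi z\|^2$ and $\gamma\|z\|^2$ terms, and using the Lipschitz bound of \cref{cor:FEstimates} together with Young's inequality, I get $\frac{d}{dt}\|z(t)\|_{L^2(\Omega)}^2 \le C_*\|u_1(t)-u_2(t)\|_{L^2(\Omega)}^2$; integrating in time yields $\|z\|_{L^\infty(0,\tau;L^2(\Omega))}^2 \le C_*\tau\,\|u_1-u_2\|_{L^\infty(0,\tau;L^2(\Omega))}^2$, so $P$ is a contraction once $C_*\tau<1$. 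Banach's theorem then gives a unique fixed point on $[0,\tau]$, and since $\tau$ depends only on $C_*$ (not on the data size or on the initial time), the solution is extended to $[0,T]$ by iterating on $[\tau,2\tau],[2\tau,3\tau],\dots$ with updated initial data $v(k\tau)$, which is well defined by the $C([0,T];L^2(\Omega))$ embedding.

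Finally I would establish the global estimate \cref{est} and global uniqueness directly on $[0,T]$, rather than patching the local estimates. For the estimate: the fixed point $v$ satisfies \cref{eq:heatWeakX} on $[0,T]$ with $f=F(v)+G$, so \cref{embeding1X} gives $\|v\|_{L^\infty(0,T;L^2)}^2 + \nu\|\partial_\xi v\|_{L^2(\Omega_T)}^2 + \gamma\|v\|_{L^2(\Omega_T)}^2 + \|\partial_t v\|_{L^2(0,T;L^2(\TSet^n)\times H^1(U)^*)}^2 \le C(\|v_0\|_{L^2}^2 + \|F(v)+G\|_{L^2(\Omega_T)}^2)$, and $\|F(v)+G\|_{L^2(\Omega_T)}^2 \le 2K_F^2 T + 2\|G\|_{L^2(\Omega_T)}^2$ by \cref{cor:FEstimates}, which is exactly a bound $C_0$ of the stated form. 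For uniqueness on $[0,T]$: if $v_1,v_2$ are two solutions, $z=v_1-v_2$ solves the linear problem with zero data and forcing $F(v_1)-F(v_2)$; testing with $z$, dropping the good terms, and using the Lipschitz estimate gives $\|z(t)\|_{L^2(\Omega)}^2 \le C\int_0^t \|z(s)\|_{L^2(\Omega)}^2\,ds$, whence $z\equiv 0$ by Gronwall. The main obstacle is a technical one: justifying that $t\mapsto\|z(t)\|_{L^2(\Omega)}^2$ is absolutely continuous with derivative $2\langle\partial_t z,z\rangle$ in the anisotropic Gelfand-triple setting $L^2(\TSet^n)\times H^1(U) \hookrightarrow L^2(\Omega) \hookrightarrow L^2(\TSet^n)\times H^1(U)^*$, so that the energy testing is rigorous — this is the standard Lions–Magenes lemma, and I would cite \cite{Evans} (or the relevant reference in the paper) for it; everything else is routine.
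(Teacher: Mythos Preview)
Your proposal is correct and follows essentially the same approach as the paper: a Banach fixed-point argument on a ball in $L^\infty(0,\tau;L^2(\Omega))$, with the solution map built from \cref{the_1_x} applied to $f=F(u)+G$, contraction obtained by testing the difference equation with $\varphi=z$ and invoking the Lipschitz bound from \cref{cor:FEstimates}, extension to $[0,T]$ by iteration, and the global estimate and uniqueness via \cref{embeding1X} and Gronwall. The paper justifies the energy identity $\langle\partial_t z,z\rangle=\tfrac{d}{dt}\|z\|_{L^2(\Omega)}^2$ by citing \cite[p.~35]{mnrr}, which is exactly the Lions--Magenes-type lemma you flag as the one technical point.
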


\begin{proof}
  The structure of this proof resembles the one of \cref{the_2 copy(1)}: we use a
  local-in-time Banach fixed point argument, which we bootstrap to extend  the
  solution on
  $[0,T]$. Consider the set
  \begin{equation}  \label{eq:Ball}
    \calB_\tau = \{ u\in L^{\infty }(0,\tau;L^{2}(\Omega)) \colon 
                     \Vert u\Vert _{L^{\infty }(0,\tau;L^{2}(\Omega ))}\leq M_\tau\}, 
  \end{equation}
  that is, the closed ball of radius $M_\tau$ centered at the origin in $L^{\infty
  }(0,\tau;L^{2}(\Omega))$, where $\tau \in [0,T]$ and $M_\tau$ will be specified
  later on. We define the operator
\[
    P \colon L^{\infty }(0,\tau;L^{2}(\Omega ))\rightarrow L^{\infty
    }(0,\tau;L^{2}(\Omega )), \qquad u \mapsto v,
\]
where $v$ is the solution to the weak problem \cref{eq:heatWeakX} in \cref{the_1_x}
with forcing and initial conditions 
\begin{equation}\label{eq:fDef}
  f(x,\xi ,t) = F(u)(x,\xi,t) + G(x,\xi,t),
  \qquad 
  g(x,\xi) = v_0(x,\xi), 
  \qquad 
  (x,\xi,t) \in \Omega_\tau.
\end{equation}

We claim the existence of $\tau \in [0,T]$ and of $M_\tau > 0$ such that $P \colon
\calB_\tau \to \calB_\tau$ is a contraction which in turn, by Banach's fixed point
theorem, implies the existence on the time interval $(0,\tau)$ of a solution to
\cref{eq:NFWeak}, that is, a weak
solution to the nonlinear neural field problem with anisotropic diffusion
\cref{eq:model} with $\nu > 0$.

Firstly, we select $M_\tau$ so that $P \colon \calB_\tau \to \calB_\tau$ for any
$\tau \in [0,T]$. \Cref{the_1_x} ensures $P$ is injective. The estimate
\cref{embeding1X}, of which we
neglect the second, third, and fourth term on the left-hand side, guarantees that $P$
is indeed on $L^{\infty }(0,\tau;L^{2}(\Omega))$ to itself.
The choice 
\[
M_\tau =  C
    \bigg[ 
      \| v_0 \|^2_{L^2(\Omega)}
      + \| G \|^2_{L^2(\Omega_\tau)}
      + \tau^2 K_{F}^2
    \bigg],
\]
guarantees also that $\| P(u) \|_{L^{\infty }(0,\tau;L^{2}(\Omega))} \leq M_\tau$,
hence $P \colon \calB_\tau \to \calB_\tau$ for any $\tau \in [0,T]$.

Secondly, we show that $P$ is a contraction, that is, we prove that, for sufficiently
small $\tau$, we can find a constant $K_* \in [0,1)$ such that
\[
  \| P(u_1) - P(u_2) \|_{L^{\infty }(0,\tau;L^{2}(\Omega ))} \leq K_*
  \| u_1 - u_2 \|_{L^{\infty }(0,\tau;L^{2}(\Omega ))} 
  \qquad u_1, u_2 \in \calB_\tau
\]
To this end, fix $u_1, u_2 \in \calB_\tau$, and let $v_1=P(u_1)$, $v_2=P(u_2)$, that
is, the unique solutions to the weak problem in \cref{the_1_x}, with 
\[
  f_i(x,\xi ,t) = F(u_i)(x,\xi,t) + G(x,\xi,t), \qquad i=1,2, \qquad (x,\xi,t) \in \Omega_\tau,
\]
while keeping all other data constant.
The function $z = v_1 - v_2$ satisfies, for any test function $\varphi \in
L^2(\TSet^n) \times H^{1}(U)$
\[
  \begin{aligned}
    \langle \partial _{t} z,\varphi \rangle +
     \nu (\partial _{\xi } z,\partial _{\xi }\varphi)+\gamma (z,\varphi ) 
   &  = (F(u_1)-F(u_2),\varphi ), 
   & & \textrm{ a.e. $(0,\tau)$}, \\
    z(0) &=0, & &\text{on $\Omega$}.
  \end{aligned}
\]

We choose $\phi = z(t)$, recall that 
\begin{equation}\label{eq:vCont}
  \langle \partial_t z, z \rangle = \frac{d}{dt} \| z \|^2_{L^2(\Omega)}
\qquad \text{a.e. in $(0,\tau)$}, \qquad z \in C([0,\tau];L^2(\Omega)),
\end{equation}
and by the result in \cite[page 35]{mnrr} we have
\[
  \begin{aligned}
    \frac{d}{dt}\| z \|^2_{L^2(\Omega)} 
  & + \nu \| \partial_\xi z \|^2_{L^2(\Omega)} 
   + \gamma \| z \|^2_{L^2(\Omega)}  \\
  & 
  \leq 
    \frac{1}{2\gamma} \| F(u_1) - F(u_2)\|^2_{L^2(\Omega)} 
    + \frac{\gamma}{2} \| z \|^2_{L^2(\Omega)}  
    \quad \text{a.e. in $(0,\tau)$.} \\
  \end{aligned}
\]
Neglecting the second and third terms on the left-hand side, and using that  $F$ is Lipschitz function, we arrive at
\[
  \begin{aligned}
    \| v_1(t) - v_2(t) \|^2_{L^2(\Omega)} 
    & \leq C_* \int_0^t \| u_1(s) - u_2(s) \|^2_{L^2(\Omega)}\, ds  \\
    & \leq C_* t \| u_1 - u_2 \|^2_{L^\infty(0,\tau;L^2(\Omega))}
    \qquad \text{for all $t \in (0,\tau)$,}
  \end{aligned}
\]
for some positive constant $C_*$ depending on $K_F$, as defined in
\cref{cor:FEstimates}.
Therefore, picking $\tau < 1/C_*$ we obtain
\[
  \| P(u_1) - P(u_2) \|_{L^{\infty }(0,\tau;L^{2}(\Omega ))} < 
  \| u_1 - u_2 \|_{L^{\infty }(0,\tau;L^{2}(\Omega ))} 
  \qquad u_1, u_2 \in \calB_\tau.
\]

We have thus proven the existence of a unique solution to \cref{eq:NFWeak} on
$[0,\tau]$. The function $v$ is continuous on the time variable with values in $L^2(\Omega)$ by \cref{eq:emb_1}, and can be extended to
a continuous function on $[0,2\tau]$ by repeating the steps above on the
initial-value problem \cref{eq:NFWeak} posed on $(\tau,2\tau)$ with initial condition
$v(x,\xi,\tau)$. Iterating this process we find a continuous solution on the whole
$[0,T]$. It remains to show that the resulting solution $v$ is unique. If two
solutions $v_1, v_2$ then $v=v_1-v_2$ satisfies the equality \cref{eq:NFWeak} with
$F(v)=F(v_1) - F(v_2)$, initial condition $v_0 =0$, and $G=0$. Choosing
$\phi=v(t)$ in this equality leads to 
\[
\begin{aligned}
\frac{d}{dt} \| v \| _{L^{2}(\Omega )}^{2} 
  &+\nu \Vert \partial _{\xi }v\Vert_{L^{2}(\Omega )}^{2}
  +\gamma \Vert v\Vert_{L^{2}(\Omega)}^{2}=(F(v_{1})-F(v_{2}),v) \\
  &
  \leq 
    \frac{1}{2\gamma} \| F(v_1) - F(v_2)\|^2_{L^2(\Omega)} 
    + \frac{\gamma}{2} \| v \|^2_{L^2(\Omega)}  
    \quad \text{a.e. in $(0,T)$}. \\
\end{aligned}
\]  
Then, integrating this inequality with respect to the time variable over the
interval $(0,t)$ and using \cref{cor:FEstimates}, we obtain 
\begin{equation*}
||v(t)\Vert _{L^{2}(\Omega )}^{2}\leq \tilde C\int_{0}^{t}||v(s)\Vert
_{L^{2}(\Omega )}^{2}ds\quad \text{for a.e. }t\in (0,T).
\end{equation*}
By Gronwall's inequality we conclude $v(t)=v_{1}(t)-v_{2}(t)\equiv 0$.

\changed{
Finally, setting \cref{eq:fDef}
}
on the whole $\Omega_T$, using the
estimate \cref{embeding1X}, and \cref{cor:FEstimates} we derive the embedding
presented in the theorem statement, with 
\[
  C_0 = C \bigl( \|  v_0 \|_{L^2(\Omega)} + \| G \|_{L^2(\Omega_T)} + K_{F}\bigr).
\] 
\end{proof}

\section{Continuous dependence on the diffusion parameter
$\nu$}\label{sec:contDepend} Now that we have studied the well-posedness of the
regular and singular problem, we can address the dependence of solutions on the
diffusion parameter $\nu$. We prove continuous dependence on $\nu$ by expanding both
solutions in a suitable basis of $L^2(\Omega)$, and bounding their difference.

\begin{theorem} \label{the_3 convergence}
Assume \cref{hyp:general}, and fix $\gamma \geq 0$,
$v_0 \in L^2(\Omega)$, and $G \in L^2(\Omega_T)$. The solutions $v(t)$ and
$v_{\nu }(t)$ of the singular problem \cref{nu3}, and of the
regular problem \cref{eq:NFWeak}, respectively, satisfy
\begin{equation}\label{eq:Conv}
  \| v - v_\nu \|_{C([0,T];L^{2}(\Omega ))} \to 0
  \qquad \text{as $\nu \to 0$.}
\end{equation}
\end{theorem}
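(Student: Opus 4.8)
The plan is to follow the strategy announced before the statement: expand both solutions in one orthonormal basis of $L^{2}(\Omega)$ adapted to the anisotropic diffusion, so that \cref{nu3} and \cref{eq:NFWeak} reduce to sequences of scalar problems whose difference can be controlled. Let $\{e_{j}\}_{j\in\mathbb{N}}$ be an orthonormal basis of $L^{2}(\Omega)=L^{2}(\mathbb{T}^{n}\times U)$ made of (real) eigenfunctions of the Neumann realisation of $\partial_{\xi}^{2}$ on $U$ — for instance products of a real orthonormal basis of $L^{2}(\mathbb{T}^{n})$ with the $L^{2}(U)$-orthonormal cosines $\sqrt{2/L}\,\cos(k\pi\xi/L)$ — so that $-\partial_{\xi}^{2}e_{j}=\lambda_{j}e_{j}$ with $\lambda_{j}\geq 0$ and $e_{j}\in L^{2}(\mathbb{T}^{n})\times H^{1}(U)$. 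Testing the weak formulations \cref{nu3} and \cref{eq:NFWeak} against $e_{j}$, the coordinates $a_{j}(t)=(v(t),e_{j})$ and $b_{j}(t)=(v_{\nu}(t),e_{j})$, both in $H^{1}(0,T)$ by the regularity established in \cref{sec:nu0} and in \cref{the_2}, satisfy the scalar linear ODEs
\[
  \dot a_{j}=-\gamma a_{j}+(F(v)+G,e_{j}),\qquad
  \dot b_{j}=-(\gamma+\nu\lambda_{j})\,b_{j}+(F(v_{\nu})+G,e_{j}),
\]
with the common initial value $(v_{0},e_{j})$.

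Solving these ODEs by the variation-of-constants formula and subtracting, the coordinate $c_{j}:=b_{j}-a_{j}$ of $z_{\nu}:=v_{\nu}-v$ decomposes as $c_{j}=A_{j}+B_{j}+C_{j}$, where
\begin{gather*}
  A_{j}(t)=\bigl(\mathrm{e}^{-(\gamma+\nu\lambda_{j})t}-\mathrm{e}^{-\gamma t}\bigr)(v_{0},e_{j}),\\
  B_{j}(t)=\int_{0}^{t}\bigl(\mathrm{e}^{-(\gamma+\nu\lambda_{j})(t-s)}-\mathrm{e}^{-\gamma(t-s)}\bigr)(F(v(s))+G(s),e_{j})\,ds,\\
  C_{j}(t)=\int_{0}^{t}\mathrm{e}^{-(\gamma+\nu\lambda_{j})(t-s)}\bigl(F(v_{\nu}(s))-F(v(s)),e_{j}\bigr)\,ds,
\end{gather*}
so that the external forcing $G$ cancels in $C_{j}$. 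Using the elementary bound $0\leq\mathrm{e}^{-\gamma r}\bigl(1-\mathrm{e}^{-\nu\lambda_{j}r}\bigr)\leq\eta_{j}(\nu):=1-\mathrm{e}^{-\nu\lambda_{j}T}\leq 1$, valid for $\gamma\geq 0$ and $0\leq r\leq T$, together with the Cauchy--Schwarz inequality in $s$, Parseval's identity, and the Lipschitz bound of \cref{cor:FEstimates}, I expect the estimates
\begin{gather*}
  \sup_{t\in[0,T]}\sum_{j}A_{j}(t)^{2}\leq\sum_{j}\eta_{j}(\nu)^{2}(v_{0},e_{j})^{2}=:\delta_{1}(\nu)^{2},\\
  \sup_{t\in[0,T]}\sum_{j}B_{j}(t)^{2}\leq T\sum_{j}\eta_{j}(\nu)^{2}\int_{0}^{T}(F(v(s))+G(s),e_{j})^{2}\,ds=:\delta_{2}(\nu)^{2},\\
  \sum_{j}C_{j}(t)^{2}\leq TK_{F}^{2}\int_{0}^{t}\|v_{\nu}(s)-v(s)\|_{L^{2}(\Omega)}^{2}\,ds.
\end{gather*}

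The series $\sum_{j}(v_{0},e_{j})^{2}=\|v_{0}\|_{L^{2}(\Omega)}^{2}$ and $\sum_{j}\int_{0}^{T}(F(v(s))+G(s),e_{j})^{2}\,ds=\|F(v)+G\|_{L^{2}(\Omega_{T})}^{2}\leq 2TK_{F}^{2}+2\|G\|_{L^{2}(\Omega_{T})}^{2}$ are finite, the last bound using \cref{cor:FEstimates}; since $0\leq\eta_{j}(\nu)^{2}\leq 1$ and $\eta_{j}(\nu)\to 0$ as $\nu\to 0$ for every fixed $j$, dominated convergence on the countable index set then gives $\delta_{1}(\nu)\to 0$ and $\delta_{2}(\nu)\to 0$. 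Combining the three estimates via $c_{j}^{2}\leq 3(A_{j}^{2}+B_{j}^{2}+C_{j}^{2})$ and summing over $j$ gives
\[
  \|z_{\nu}(t)\|_{L^{2}(\Omega)}^{2}\leq 3\bigl(\delta_{1}(\nu)^{2}+\delta_{2}(\nu)^{2}\bigr)+3TK_{F}^{2}\int_{0}^{t}\|z_{\nu}(s)\|_{L^{2}(\Omega)}^{2}\,ds,
\]
and Gronwall's inequality yields $\sup_{t\in[0,T]}\|z_{\nu}(t)\|_{L^{2}(\Omega)}^{2}\leq 3\bigl(\delta_{1}(\nu)^{2}+\delta_{2}(\nu)^{2}\bigr)\mathrm{e}^{3T^{2}K_{F}^{2}}\to 0$ as $\nu\to 0$, which is exactly \cref{eq:Conv}.

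The step I expect to be the main obstacle — and the reason a direct energy estimate on $z_{\nu}$ is unavailable — is that the singular solution $v$ carries no regularity in the dendritic variable $\xi$: under \cref{hyp:general} it lies only in $L^{2}(\Omega_{T})\cap H^{1}(0,T;L^{2}(\Omega))$, as neither $v_{0}$ nor the kernel $W$ is assumed $\xi$-smooth, and $F(v)$ inherits no $\xi$-regularity either. Testing the equation for $z_{\nu}$ against $z_{\nu}$ would therefore produce the uncontrollable term $\nu(\partial_{\xi}v,\partial_{\xi}z_{\nu})$. The spectral/variation-of-constants representation circumvents this, because the diffusion discrepancy enters only through the multipliers $\mathrm{e}^{-\nu\lambda_{j}t}-1$ acting on the square-summable data $(v_{0},e_{j})$ and $(F(v)+G,e_{j})$: one trades regularity of $v$ for a mode-by-mode dominated-convergence argument, at the price of losing any convergence rate. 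This is precisely why the quantitative $O(\nu^{1/2})$ estimate is deferred to \cref{sec:convergenceRate}, where it is obtained under stronger regularity hypotheses on $v_{0}$ and $G$.
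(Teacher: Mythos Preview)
Your proof is correct and follows essentially the same approach as the paper: a spectral expansion in the Neumann eigenfunctions of $\partial_\xi^2$, a variation-of-constants representation of the coordinates, the bound $|e^{-(\gamma+\nu\lambda_j)r}-e^{-\gamma r}|\leq 1-e^{-\nu\lambda_j T}$, and Gronwall's inequality on the $F(v_\nu)-F(v)$ term. Your version is slightly more streamlined---you take an orthonormal basis of all of $L^2(\Omega)$ rather than of $L^2(U)$ with $x$ carried as a parameter, and you invoke dominated convergence on the index set instead of the paper's explicit tail-plus-finite-sum $\varepsilon$-splitting---but these are cosmetic differences, not a different route.
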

\begin{proof}
  Using  the integral operator $F$ in \cref{eq:integral}, throughout the proof we set 
  \begin{equation}\label{eq:NDef}
    N = G + F(v), \qquad N_\nu = G + F(v_\nu), \qquad \text{a.e in $\Omega_T$}.
  \end{equation}
  By \cref{the_2,the_2
  copy(1)} we have $v,v_{\nu}\in L^{2}(\Omega_{T})$, uniformly on $\nu >0$. The
  main hypothesis $v_0 \in L^2(\Omega)$ and \cref{cor:FEstimates} give
  \[
    \| N \|^2_{L^2(\Omega_T)}, \| N_{\nu} \|^2_{L^2(\Omega_T)} < \infty, 
  \]
  which, combined with Fubini's theorem, imply
  \begin{equation}\label{reg}
    N(x,\blank,\blank), N_{\nu}(x,\blank,\blank) \in L^2(U_T), \qquad
    v_0(x,\blank) \in
    L^2(U) \qquad \text{for a.e. $x \in \TSet^n$.}
  \end{equation}
  
  To demonstrate  \cref{eq:Conv} it is enough to show that for any $\epsi >0$ there exists a function 
  $\nu \colon \RSet_{ \geq 0} \to \RSet_{ \geq 0}$ such that
  \[
  \| v - v_{\nu} \|^2_{C([0,T]; L^2(\Omega))} < \epsi \qquad \text{for all $\nu <
  \nu(\epsi)$.}
  \]
  The proof proceeds in steps, and leads to the norm estimate for $v(t)-v_\nu(t)$
  through various calculations involving point-wise estimates and evaluations
  for fixed $x \in \TSet^n$. We shall sometimes omit the dependence on $x$, to
  simplify the notation.

  \textit{Step 1: eigenvalues and eigenfunctions.} Since $v_\nu(t)$
   satisfies the system \cref{eq:NFWeak}, then, by the classical results \cite{Mih},
   for each $x \in \TSet^n$ we can expand $v_{\nu }$ as a linear combination of
   eigenfunctions of the following spectral problem associated to \cref{eq:NFWeak},
   \begin{equation}\label{eq:eienpairs}
     \begin{aligned}
       \nu \psi'' - \gamma \psi = \lambda \psi, \quad \text{on $U$,}
       \qquad \psi= 0, \quad \text{on $\partial U$.} 
     \end{aligned}
   \end{equation}
   The eigenpairs $\{ (\lambda_k,\psi_k) \}_{k \geq 0}$ of this problem are known
   \[
     \begin{aligned}
     & \psi_0(\xi) = \sqrt{\frac{1}{L}}, 
       && \lambda_0 = -\gamma ,
       && 
       \\
     & \psi_k(\xi) = \sqrt{\frac{2}{L}} \cos \biggl( \frac{k \pi \xi}{L}\biggr),
     && \lambda_k = -\gamma -\nu \biggl( \frac{k \pi \xi}{L}\biggr),
     && k  \geq 1, \\
     \end{aligned}
   \]
   and the set $\{ \psi_k \}_{k \geq 0}$ is an orthogonal basis in $L^2(U)$ with
   respect to the standard inner product and norm
   \[
     (u,z)_{L^2(U)}=\int_{0}^{L}u(\xi )z(\xi )d\xi, \qquad 
     \|u \|^2_{L^{2}(U)}=(u,u)_{L^2(U)},
   \]
   respectively. Moreover, $\{\psi_{k}\}_{k\geq 0}$ is also the orthogonal basis
   in the space $H^{1}(U)$ with respect to inner product and norm
   \[
     (u,z)_{H_1(U)}=
     \int_{0}^{L}\big[ \gamma u(\xi )z(\xi )+ u'(\xi)z'(\xi)\big] d\xi, \qquad 
     \|u \|^2_{H^1(U)}=(u,u)_{H^1(U)}.
   \]
    
  \textit{Step 2: expansion of $v_\nu$.} We now seek to expand $v_{\nu }(x,\xi ,t)$
  in the form 
  \begin{equation} \label{exp1}
  v_{\nu }(x,\xi ,t)=\sum_{k=0}^{\infty} g_{k}(x,t)\psi _{k}(\xi ),
  \end{equation}
  and we determine coefficients $g_k$ so that $v_\nu$ satisfies
  \cref{eq:NFWeak} in the sense of distributions. By the initial condition
  in \cref{eq:NFWeak} it follows that 
  \begin{equation} \label{exp2}
    v_{\nu }(x,\xi ,0)=\sum_{k=0}^{\infty }g_{k}(x,0)\psi _{k}(\xi )=v_{0}(x,\xi).
  \end{equation}
On the other hand, applying Parseval's equality and using \cref{reg}, we have
\begin{equation} \label{exp4}
\begin{aligned}
  & v_{0}(x,\xi ) = \sum_{k=0}^{\infty }v_{0,k}(x)\psi _{k}(\xi),
    && v_{0,k}(x)=\bigl(v_{0}(x,\blank),\psi _{k}\bigr)_{L^2(U)}, \\
  & N_{\nu }(x,\xi ,t) =\sum_{k=0}^{\infty }N_{\nu ,k}(x,t)\psi _{k}(\xi),
    && N_{\nu ,k}(x,t)=\bigl(N_{\nu }(x,\blank,t),\psi_{k}\bigr)_{L^2(U)}, 
\end{aligned}%
\end{equation}
which imply
\begin{equation}  \label{exp5}
\begin{aligned}
 & \Vert v_{0}(x)\Vert _{L^{2}(U)}^{2} 
     = \sum_{k=0}^{\infty }v_{0,k}^{2}(x) < \infty, 
  && \text{a.e. $x\in \mathbb{T}^{n}$,} \\
 & \Vert N_{\nu}(x,t)\Vert _{L^{2}(U)}^{2} 
     = \sum_{k=0}^{\infty}N_{\nu ,k}^{2}(x,t) < \infty, 
 && \text{a.e. $(x,t)\in \mathbb{T}^{n}\times \lbrack 0,T],$}
\end{aligned}
\end{equation}
and, by the monotone convergence theorem, we obtain
\begin{equation}\label{exp7}
  \Vert N_{\nu }(x)\Vert_{L^{2}(U_{T})}^{2} 
    =\sum_{k=0}^{\infty}\int_{0}^{T}N_{\nu ,k}^{2}(x,t)dt <\infty, 
    \qquad 
    \text{a.e. $x\in \mathbb{T}^{n}$.} 
\end{equation}%

Integrating \cref{exp5,exp7} over $x\in \mathbb{T}^{n}$ and applying again the
monotone convergence theorem, we have%
\begin{equation} \label{exp10}
 \begin{aligned}
  & \Vert v_{0}\Vert_{L^{2}(\Omega )}^{2} 
    =\sum_{k=0}^{\infty } \hat{v}_{0,k}^{2} < \infty,
   && \hat{v}_{0,k}^{2}=\int_{\mathbb{T}^{n}} v_{0,k}^{2}(x)dx,  \\
  & \Vert N_{\nu} \Vert _{L^{2}(\Omega_{T})}^{2}
    =\sum_{k=0}^{\infty }\int_{0}^{T}\hat N_{\nu ,k}^{2}(t)dt <\infty,
   && \hat N_{\nu ,k}^{2}(t)=\int_{\mathbb{T}^{n}}N_{\nu ,k}^{2}(x,t)dx.
 \end{aligned} 
\end{equation}

Substituting the expansions \crefrange{exp1}{exp4} into system \cref{eq:NFWeak}, we
conclude that for each $k \in \ZSet_{ \geq 0}$, the
coefficients $g_{k}(t)$ satisfy the following initial value problem 
\begin{equation} \label{parabnew1}
  \begin{aligned}
    & \partial_t g_k(x,t) = \lambda_k g_k(x,t) + N_\nu(x,t), 
      && (x,t) \in \TSet^n \times [0,T], \\
    &  g_k(x,0) = v_{0,k}(x), 
      && x \in \TSet^n, 
  \end{aligned}
\end{equation}
and are therefore given by
\begin{equation}\label{parabnew3}
  g_{k}(x,t)=v_{0,k}(x) \exp \bigl(\lambda _{k}t\bigr) 
   + \int_{0}^{t}N_{\nu ,k}(x,s)\exp\bigl( \lambda _{k}(t-s)\bigr) ds,
\end{equation}%
such that $g_{k}(x,\blank) \in H^{1}(0,T)\subset C([0,T])$ for a.e. $x\in
\mathbb{T}^{n}$. This, together with \cref{exp1} completes the expansion of $v_\nu$.
\begin{equation}\label{parabold7}
  v_\nu(x,\xi ,t) = \sum_{k=0}^{\infty} \psi _{k}(\xi )
   \Bigl[v_{0,k}(x)\exp \left( \lambda_k t\right) 
     + \int_{0}^{t}N_{\nu,k}(x,s)\exp \left( \lambda_k(t-s)\right) ds \Bigr].  
\end{equation}
  
\textit{Step 3: expansion of $v$.} The next step is to expand the solution $v$ of the
singular neural field system \cref{nu3} in terms of the eigenfunctions $ \psi
_{k}$. The solution of the problem \cref{nu3} can be written in the form 
\begin{equation} \label{parabnew6}
  v(x,\xi ,t) = v_{0}(x,\xi ) e^{-\gamma t}
    +\int_{0}^{t} N(x,\xi ,s)e^{-\gamma (t-s)} ds.
\end{equation}
By the regularity (\ref{reg}) we have%
\begin{equation} \label{expp}
  N(x,\xi,t)=\sum_{k=0}^{\infty }N_{k}(x,t)\psi_{k}(\xi),
    \qquad N_{k}(x,t)=\bigl(N(x,\blank,t),\psi_{k}\bigr)_{L^2(U)},
\end{equation}
and by Parseval's equality and the monotone convergence theorem
\begin{equation}\label{exp9}
\begin{aligned}
& \Vert N(x,t)\Vert _{L^{2}(U)}^{2} = \sum_{k=0}^{\infty} N_{k}^{2}(x,t) <\infty, 
  && \text{a.e.$(x,t)\in \mathbb{T}^n \times [0,T]$}, \\
& \Vert N(x) \Vert _{L^{2}(U_{T})}^{2} 
  = \sum_{k=0}^{\infty}\int_{0}^{T}N_{k}^{2}(x,t) dt,
  && \text{a.e. $x\in \mathbb{T}^n$},  \\
& \Vert N \Vert _{L^{2}(\Omega_{T})}^{2} 
  = \sum_{k=0}^{\infty}\int_{0}^{T} \hat N_{k}^{2}(t)dt < \infty, 
  && \hat N_{k}^{2}(t)=\int_{\mathbb{T}^{n}}N_{k}^{2}(x,t) dx
\end{aligned}
\end{equation}

Substituting \cref{exp4,expp} into the right-hand side of \cref{parabnew6} we obtain 
\begin{equation}\label{parabnew7}
  v(x,\xi ,t) = \sum_{k=0}^{\infty } \psi _{k}(\xi )
   \biggl[v_{0,k}(x)e^{-\gamma t} +\int_{0}^{t}N_{k}(x,s) e^{-\gamma (t-s)} ds \biggr].  
\end{equation}

\textit{Step 4: expanding $v-v_\nu$ in sums over functions $A_k$ and
$B_k$.} Using the previous
two steps, we will now derive an priori estimate for the difference of $v$ and
$v_{\nu }$ in the norm of the Banach space $L^{2}(U)$.
\changed{
 Henceforth we write}
$-\mu_k(\nu) = -\nu (k\pi/L)^2 = \lambda_k(\nu,\gamma) + \gamma$, where
$\lambda_k$ are the eigenvalues \cref{eq:eienpairs}, and omit the dependence on
$\nu$ when possible.
\changed{
 Combining these definitions} 
with \cref{parabnew7,exp1,parabnew3} we obtain
\[\label{dif1}
  \begin{aligned}
    v(x,\xi ,t)-v_{\nu }(x,\xi ,t) = 
    \sum_{k=0}^{\infty }\psi _{k}(\xi)
    \biggl[
      & v_{0,k}(x) ( 1- e^{-\mu_k t}) e^{-\gamma t} \\
      & + \int_{0}^{t}\bigl( N_{k}(x,s)-N_{\nu ,k}(x,s) e^{-\mu_k (t-s)}\bigr) e^{-\gamma(t-s)} ds
    \biggr] .
  \end{aligned}
\]
Since $\{\psi _{k}\}_{k>0}$ is the orthonormal basis in $L^{2}(U)$, then
taking the $L^{2}(U)$-norm of the previous expressions we obtain 
\begin{equation} \label{dif2}
\Vert v(x,t)-v_{\nu }(x,t)\Vert_{L^{2}(\Omega )}^{2} 
  =\sum_{k=0}^{\infty }(A_{k}+B_{k})^{2}
  \leq 2\sum_{k=0}^{\infty }A_{k}^{2}+2\sum_{k=0}^{\infty }B_{k}^{2},  
\end{equation}
where 
\begin{equation} \label{dif4}
\begin{aligned}
& A_{k}(x,t) = v_{0,k}(x)( 1- e^{-\mu_k t}) e^{-\gamma t}, \\
& B_{k}(x,t) = \int_{0}^{t}\bigl[ N_{k}(x,s)-N_{\nu ,k}(x,s) e^{-\mu_k (t-s)}\bigr] e^{-\gamma(t-s)} ds,
\end{aligned}
\end{equation}
and a further integration over $x \in \TSet^n$ gives
\begin{equation} \label{dif22}
  \Vert v(t)-v_{\nu }(t)\Vert _{L^{2}(\Omega )}^{2}
  \leq 2\sum_{k=0}^{\infty }\int_{\mathbb{T}^{n}} A_{k}^{2}dx
  +2\sum_{k=0}^{\infty }\int_{\mathbb{T}^{n}}B_{k}^{2}dx.
\end{equation}

\textit{Step 5: bounding the sum in $A^2_k$ in the expansion \cref{dif22}}.
By the first equality in \cref{exp10}, we have that for any $\epsilon >0$ there
exists $k_{1}(\epsilon )>0,$ such that
\[
  \sum_{k=k_{1}}^{\infty }\hat{v}_{0,k}^{2}<\frac{\epsilon }{4D},
\]
where the constant $D>0$ will be chosen below, at the very end of the proof. For all
$k \in \ZSet_{\geq 0}$ and $t\in [0,T]$ it
holds $0 \leq ( 1- e^{-\mu_k t}) e^{-\gamma t} \leq 1$, hence we estimate
\begin{equation}\label{dif5}
  \sum_{k=k_{1}}^{\infty }\int_{\mathbb{T}^{n}}A_{k}^{2}(x,t)dx<\frac{\epsilon}{4D}, \qquad t\in [0,T].
\end{equation}
Moreover, for almost any $x \in \TSet^n$, and all $t \in [0,T]$ we have
\[
  \sum_{k=0}^{k_{1}-1}A_{k}^{2}(x,t)
  \leq \sum_{k=0}^{k_{1}-1}|v_{0,k}(x)|^{2} ( 1- e^{-\mu_{k_1} t})^2 e^{-2\gamma t} 
  \leq \Vert v_{0}(x)\Vert _{L^{2}(U)}^{2}( 1- e^{-\mu_{k_1} T})^2,
\]
whence
\begin{equation} \label{dif6}
  \sum_{k=0}^{k_{1}-1}\int_{\mathbb{T}^{n}}A_{k}^{2}(x,t)dx
  \leq \Vert v_{0}\Vert _{L^{2}(\Omega )}^{2}( 1- e^{-\mu_{k_1} T})^2,
  \qquad t \in [0,T]
\end{equation}%

Recalling the dependence of $\mu_k(\nu)$, and using the continuity and monotonicity of
the exponential function, we obtain that for any $\epsi > 0$, there exists
$\nu_1(\epsi)$ such that
\[
  ( 1- e^{-\mu_{k_1}(\nu) T})^2 
  = ( 1- e^{-\nu (k_1\pi/L)^2 T})^2
  < \frac{\epsi}{4 \| v_0 \|^2_{L^2(\Omega)} D}
  \qquad 
  \text{for all $\nu < \nu_1(\epsi)$.}
\]
Using \cref{dif5,dif6}, we deduce that for any $\epsilon >0$  there exists
$\nu_{1}(\epsilon)$ such that
\begin{equation} \label{dif7}
  \sum_{k=0}^{\infty }\int_{\mathbb{T}^{n}}A_{k}^{2}(x,t)dx
  < \frac{\epsilon }{2D}, \qquad 
  \text{for all $\nu <\nu_{1}(\epsilon)$ and $t\in [0,T]$,}
\end{equation}
that is, we have found a bound for the sum in the terms $A_k$ in \cref{dif22}.
  
\textit{Step 6: bounding the sum in $B^2_k$ in the expansion \cref{dif22}.}
\changed{
 With this goal, }
 we rewrite the $B_k$ in \cref{dif4} in the form
\[
 \begin{aligned}
   B_{k}(x,t) 
   & = \int_{0}^{t} N_{k}(x,s)(1- e^{-\mu_k (t-s)}) e^{-\gamma(t-s)} ds \\
   & + \int_{0}^{t} \bigl[ N_{k}(x,s)-N_{\nu ,k}(x,s)\bigr] e^{-\mu_k (t-s)} e^{-\gamma(t-s)} ds,
 \end{aligned} 
\]
Recalling that $(\alpha +\beta )^{2}\leq 2\alpha ^{2}+2\beta ^{2}$, it follows that 
\begin{equation} \label{Bk}
   B^2_{k}(x,t) 
   = 2T(1- e^{-\mu_k T})^2 \int_{0}^{t} N^2_{k}(x,s) ds 
   + 2T \int_{0}^{t} \bigl[ N_{k}(x,s)-N_{\nu ,k}(x,s)\bigr]  ds.
\end{equation}
From the third inequality in \cref{exp9} we deduce that, for any $\epsilon >0$, there
exists an integer $k_2 = k_{2}(\epsilon)$ such that
\[
  \label{dif8}
  \sum_{k=k_{2}}^{\infty}
  \int_{0}^{T} \hat N_{k}^{2}(s)ds
  < \frac{\epsilon }{4D}(2T)^{-1},
\]
hence, integrating \cref{Bk} over $x\in \mathbb{T}^{n}$ we obtain
\[ \label{dif11}
\begin{aligned}
  \sum_{k=0}^{\infty }\int_{\mathbb{T}^{n}}B_{k}^{2}(x,t)dx 
  &
    \leq 2T
    \Biggl(
     \sum_{k=k_{2}}^{\infty } \int_{0}^{T}\hat{N}_{k}^{2}(s)ds
    \Biggr) \\
    & +2T(1- e^{-\mu_{k_2} T})^2 
      \Biggl( \sum_{k=0}^{k_{2}-1}\int_{0}^{T}\hat{N}_{k}^{2}(s)ds\Biggr) \\
    & + 2T \int_{0}^{t} \sum_{k=1}^{\infty} \int_{\TSet^n} \bigl| N_{k}(x,s)-N_{\nu
    ,k}(x,s) \big|^2  dx ds.
  \end{aligned} 
\]
Using the definition of $N$ in \cref{eq:NDef}, and the Lipschitzianity of the function $F$, we estimate 
\[
\begin{aligned}
\sum_{k=0}^{\infty }\int_{\mathbb{T}^{n}}B_{k}^{2}(x,t)dx 
   \leq \frac{\epsi}{4D} 
   & + 2T (1- e^{-\mu_{k_2(\epsi)}(\nu) T})^2 \| G + F(v) \|^2_{L^2(\Omega_T)} \\
    & + 2T K_F \int_{0}^{t} \| u(s)- u_\nu(s) \|^2_{L^2(\Omega)}  ds.
\end{aligned}
\]
We choose $\nu_2(\epsi)$ such that
\begin{equation*}
(1- e^{-\mu_{k_2(\epsi)}(\nu) T})^2 
 \leq 
 \frac{\epsilon }{4D} \left(2T\Vert G + F(v)\Vert _{L^{2}(\Omega _{T})}^{2}\right) ^{-1},
 \qquad \text{for all $\nu < \nu_{2}(\epsilon)$}, 
\end{equation*}
and we conclude
\begin{equation}\label{dif12}
\begin{aligned}
\sum_{k=0}^{\infty }\int_{\mathbb{T}^{n}}B_{k}^{2}(x,t)dx 
   \leq \frac{\epsi}{2D} 
    & + 2T K_F \int_{0}^{t} \| u(s)- u_\nu(s) \|^2_{L^2(\Omega)}  ds.
\end{aligned}
\end{equation}

\textit{Step 7: final estimate.}
We set $\nu (\epsilon )=\min (\nu _{1}(\epsilon ),\nu _{2}(\epsilon ))$, and combine
\cref{dif2,dif7,dif12} to obtain
\[
  \Vert v(t)-v_{\nu }(t)\Vert _{L^{2}(\Omega )}^{2} 
    \leq \frac{\epsilon }{D}
    +2T K_F \int_{0}^{t}\Vert v(s)-v_{\nu }(s)\Vert _{L^{2}(\Omega)}^{2}ds,
    \qquad \text{for all $\nu <\nu (\epsilon )$.}
\]
Applying Gronwall's inequality, recalling that $D$ is a so far unspecified
constant, and picking $D=e^{2T^2K_F}$, we find 
\[
  \Vert v(t)-v_{\nu }(t)\Vert^2_{L^{2}(\Omega )} 
  \leq \frac{e^{2T^2K_F}}{D}\epsilon = \epsi ,\qquad 
  \text{for all $\nu <\nu (\epsilon )$ and $t\in [0,T]$},
\]
hence
\[
  \Vert v-v_{\nu }\Vert _{C([0,T];L^{2}(\Omega ))} \to 0 \qquad \text{as $\nu \to 0$},
\]
and the proof is complete.
\end{proof}

\section{Convergence rate of the sequence $v_\nu - v$ to $0$}\label{sec:convergenceRate} 
The statement below shows that if, in addition to the hypotheses of \cref{the_3
convergence}, one adds regularity assumptions to the initial data and external input,
then it is possible to prove $v_\nu - v$ is $O(\nu^{1/2})$.

\begin{theorem}\label{thm:nuEstimate}
Assume \cref{hyp:general}, and fix $\gamma \geq 0$, $v_0 \in L^2(\Omega)$, and $G \in
L^2(\Omega_T)$. If, in addition, $\partial_\xi v_0 \in L^2(\Omega)$ and $\partial_\xi G \in
L^2(\Omega_T)$, then the
solutions $v$ and $v_{\nu }$ to \cref{nu3,eq:NFWeak}, respectively, satisfy
\begin{equation}
\Vert v_{\nu }-v\Vert _{L^{\infty }(0,T;L^{2}(\Omega ))}
\leq C e^{DT} \nu^{1/2},
\label{rate}
\end{equation}
where positive constants $C$ and $D$ depend on $\gamma$, the constant $K_F$ in
\cref{cor:FEstimates}, and the norms $\Vert v_{0}\Vert _{L^{2}(\Omega)}$, $\Vert
\partial_{\xi} v_{0} \Vert_{L^{2}(\Omega)}$, $\Vert G \Vert _{L^{2}(\Omega_{T})}$,
and $\Vert \partial _{\xi }G\Vert_{L^{2}(\Omega _{T})}$.
\end{theorem}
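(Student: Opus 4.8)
The plan is to rerun the proof of \cref{the_3 convergence}, upgrading each qualitative bound to a quantitative one; the only additional ingredient is the elementary inequality $(1-e^{-s})^2\le s$, valid for every $s\ge0$. Recall from that proof the eigenfunction expansion $v(x,\cdot,t)-v_\nu(x,\cdot,t)=\sum_{k\ge0}\bigl(A_k(x,t)+B_k(x,t)\bigr)\psi_k$ for a.e.\ $x\in\TSet^n$, with $A_k,B_k$ as in \cref{dif4}, $\mu_k=\nu(k\pi/L)^2$, and the resulting estimate \cref{dif22}. The point is that wherever the proof of \cref{the_3 convergence} used the crude bound $(1-e^{-\mu_k(\cdot)})^2\le1$ one may now write $(1-e^{-\mu_k(\cdot)})^2\le\mu_k(\cdot)=\nu(k\pi/L)^2(\cdot)$; this releases a factor $\nu$ together with a factor $(k\pi/L)^2$, and the latter is exactly what is summable, by Parseval's identity, against the $\xi$-derivatives of the data, which are square integrable by the added hypotheses.

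Concretely, since $\|\psi_k'\|_{L^2(U)}^2=(k\pi/L)^2$, Parseval gives $\sum_k(k\pi/L)^2\hat v_{0,k}^2=\|\partial_\xi v_0\|_{L^2(\Omega)}^2$ and $\sum_k(k\pi/L)^2\int_0^T\hat N_k^2(s)\,ds=\|\partial_\xi N\|_{L^2(\Omega_T)}^2$, where $N=G+F(v)$ as in \cref{eq:NDef}. Using $(1-e^{-\mu_k t})^2\le\mu_k t$ the $A_k$-sum in \cref{dif22} is bounded by $t\sum_k\mu_k\hat v_{0,k}^2\le\nu T\,\|\partial_\xi v_0\|_{L^2(\Omega)}^2$. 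Splitting $B_k$ exactly as in Step~6 of the proof of \cref{the_3 convergence}, the ``frozen forcing'' part $\int_0^t N_k(1-e^{-\mu_k(t-s)})e^{-\gamma(t-s)}\,ds$ is controlled, by Cauchy--Schwarz in $s$ and the same inequality, by a multiple of $\nu\sum_k(k\pi/L)^2\int_0^t\hat N_k^2(s)\,ds\le\nu T^2\|\partial_\xi N\|_{L^2(\Omega_T)}^2$, whereas the ``Lipschitz'' part $\int_0^t(N_k-N_{\nu,k})e^{-\mu_k(t-s)}e^{-\gamma(t-s)}\,ds$ is controlled, via $\|F(v(s))-F(v_\nu(s))\|_{L^2(\Omega)}\le K_F\|v(s)-v_\nu(s)\|_{L^2(\Omega)}$ from \cref{cor:FEstimates}, by $T K_F^2\int_0^t\|v(s)-v_\nu(s)\|_{L^2(\Omega)}^2\,ds$. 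Inserting these three estimates into \cref{dif22} yields, after collecting the numerical factors,
\[
\|v(t)-v_\nu(t)\|_{L^2(\Omega)}^2\le C\nu+D\int_0^t\|v(s)-v_\nu(s)\|_{L^2(\Omega)}^2\,ds,\qquad t\in[0,T],
\]
where $C$ depends only on $T$, $\|\partial_\xi v_0\|_{L^2(\Omega)}$ and $\|\partial_\xi N\|_{L^2(\Omega_T)}$, and $D$ is a multiple of $T K_F^2$. Gronwall's inequality then gives $\|v(t)-v_\nu(t)\|_{L^2(\Omega)}^2\le C e^{DT}\nu$; taking a square root and the supremum over $t\in[0,T]$ produces \cref{rate} after relabelling $C$ and $D$.

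I expect the only real obstacle to be the quantity $\|\partial_\xi N\|_{L^2(\Omega_T)}=\|\partial_\xi(G+F(v))\|_{L^2(\Omega_T)}$: the contribution $\partial_\xi G$ is square integrable by the added hypothesis, but one must also verify that $\partial_\xi F(v)\in L^2(\Omega_T)$, i.e.\ that the prescribed $\xi$-smoothness of the data is inherited by the nonlocal forcing (equivalently, that the singular solution $v$ picks up one derivative in $\xi$, which can be read off from the representation \cref{parabnew6}). This step uses a matching $\xi$-regularity of the kernel $W$; for the structured kernel \cref{eq:kernel} it is immediate, since there $F$ factorises through a fixed smooth profile in $\xi$. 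Everything else — the bookkeeping for the $A_k$- and $B_k$-sums and the final Gronwall step — is a purely quantitative rerun of the proof of \cref{the_3 convergence}, where I would not anticipate any difficulty.
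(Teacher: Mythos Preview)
Your approach is sound in outline but takes a genuinely different route from the paper. The paper does not rerun the eigenfunction expansion of \cref{the_3 convergence} at all. Instead it tests the difference $w=v_\nu-v$ against $\varphi=w$ directly in the weak formulations, obtaining the energy inequality
\[
\|w(t)\|_{L^2(\Omega)}^2 \le C_1\int_0^t\|w(s)\|_{L^2(\Omega)}^2\,ds + \frac{\nu}{2}\int_0^t\|\partial_\xi v(s)\|_{L^2(\Omega)}^2\,ds,
\]
so the whole task reduces to bounding $\|\partial_\xi v\|_{L^2(\Omega_T)}$ for the \emph{singular} solution. That bound is then obtained by a difference-quotient argument: extend $v_0$ and $G$ to a slightly larger $\xi$-interval, write the equation satisfied by $\hat v_h=\hat v(\cdot,\cdot+h,\cdot)-\hat v$, estimate by Gronwall, and pass $h\to0$. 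Your Parseval quantification is cleaner once the machinery of \cref{the_3 convergence} is in hand, and it makes transparent why $\|\partial_\xi v_0\|_{L^2}$ and $\|\partial_\xi G\|_{L^2}$ appear, via $\sum_k(k\pi/L)^2\hat v_{0,k}^2=\|\partial_\xi v_0\|_{L^2(\Omega)}^2$; the paper's energy-plus-difference-quotient route is more robust (no explicit eigenpairs needed) and, crucially, handles the nonlocal term differently. In the $\hat v_h$-equation the paper writes the nonlocal contribution as $F(\hat v(\cdot,\cdot+h,\cdot))-F(\hat v)$ and bounds it by $K_F\|\hat v_h\|_{L^2(\Omega)}$ via \cref{cor:FEstimates}, so it is absorbed into the Gronwall constant rather than the data norms, and no $\xi$-regularity of $W$ is invoked. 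Your route, by contrast, needs $\|\partial_\xi N\|_{L^2(\Omega_T)}$ with $N=G+F(v)$ as a fixed input, hence $\partial_\xi F(v)\in L^2(\Omega_T)$ --- exactly the obstacle you flag --- and under \cref{hyp:general} alone this amounts to the extra assumption $\partial_\xi W\in L^2(\Omega\times\Omega)$ (true for \cref{eq:kernel}, but not part of the stated hypotheses).
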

\begin{proof}
  We initially find an estimate for $v - v_{\nu}$ in a different way to what was
  done in the proof of \cref{the_3 convergence}. This new derivation will reveal that
  estimate \cref{rate} is possible if we gain control over $\partial_\xi v$,
  which we then proceed to bound. 
    
  The difference $w=v_{\nu}-v$ satisfies, for any test function 
  \[
    \begin{aligned}
   &
     \begin{aligned}[b]
       \langle\partial _{t}w,\varphi \rangle
       & + \nu (\partial _{\xi}v_{\nu},\partial_{\xi}\varphi ) \\
       &+\gamma (w,\varphi ) = (F(v_{\nu })-F(v),\varphi ),
       \quad 
  \phi, \partial_\xi \phi \in L^2(\Omega),
     \end{aligned}
   && \text{a.e. in $(0,T)$} \\
   & w =0
   && \text{on $\Omega \times \{ t=0 \}$ }.
    \end{aligned}
  \]
Choosing the test function $\varphi=w$ gives, for almost any in $t \in (0,T)$:
\[
\begin{aligned}
\frac{d}{dt}\Vert w \Vert _{L^{2}(\Omega )}^{2} 
& + \nu \| \partial_\xi v_\nu \|^2_{L^2(\Omega)}
  + \gamma \Vert w \Vert _{L^{2}(\Omega)}^{2} 
  = \bigl(F(v_{\nu})-F(v),w\bigr)
  + \nu (\partial _{\xi }v_{\nu },\partial _{\xi }v) \\
& \leq \frac{2}{\gamma} \Vert F(v_{\nu })-F(v)\Vert _{L^{2}(\Omega )}^{2}
+\frac{\gamma }{2} \Vert w\Vert _{L^{2}(\Omega )}^{2}
+\frac{\nu }{2}\Vert \partial_{\xi }v_{\nu }\Vert_{L^{2}(\Omega )}^{2}
+\frac{\nu }{2}\Vert \partial_{\xi} v\Vert _{L^{2}(\Omega)}^{2}.
\end{aligned}
\]
Disregarding the terms in $\| \partial_\xi v_\nu \|_{L^2(\Omega)}$, integrating
with respect to the time variable over the interval $(0,t)$, and using the
Lipschitzianity of $F$ from \cref{cor:FEstimates}, we obtain
\begin{multline}\label{eq:wEst}
\Vert w(t) \Vert _{L^{2}(\Omega )}^{2} + \int_{0}^{t}\frac{\gamma }{2}
\Vert w(s) \Vert _{L^{2}(\Omega )}^{2}ds \\
 \leq 
C_1 \int_{0}^{t} \Vert w(s)\Vert_{L^{2}(\Omega)}^{2}ds 
+\frac{\nu }{2}  \int_{0}^{t} \Vert \partial _{\xi }v(s)\Vert _{L^{2}(\Omega )}^{2} ds,
\end{multline}
for all $t \in (0,T)$, where $C_1= 2K_F/\gamma$. Inspecting the previous
inequality we note that a bound on $\| \partial_\xi v(t) \|_{L^2(\Omega)}$ is
necessary to find an estimate for $\|w(t) \|^2_{L^2(\Omega)}$. We put the inequality
above \changed{aside, and we devote the rest of the proof to finding} the bound on $\| \partial_\xi v(t)
\|_{L^2(\Omega)}$. 

We begin by extending the singular neural field solution $v$ in the spatial
variable $\xi$. For fixed $\delta >0$, we set 
  \[
    \hat U = (-\delta,L+\delta) \supset U,
    \qquad 
    \hat \Omega = \TSet^n \times \hat U \supset \Omega,
    \qquad 
    \hat \Omega_T = \hat \Omega \times [0,T] \supset \Omega_T.
  \]
  Following the approach of \cite[Theorem 1, p. 268]{Evans},
  we deduce that for any $\delta >0$ there exist functions $\hat{v}_{0}$ and
  $\hat{G}$ such that%
  \[  \label{v0VV}
    \begin{aligned}
    & \hat v_0 \in L^2(\hat \Omega), 
    && \partial_\xi \hat v_0 \in L^2(\hat \Omega), 
    && \hat v_0 = v_0 
    && \text{in $\Omega$}, \\
    & \hat G \in L^2(\hat \Omega_T), 
    && \partial_\xi \hat G \in L^2(\hat \Omega_T), 
    && \hat G = G 
    && \text{in $\Omega_T$}.
    \end{aligned}
  \]
  \changed{
  We can now use Theorem \ref{the_2 copy(1)} to show there exists} a unique weak solution
$\hat{v}$ to the following problem, on the extended domain
\[
  \begin{aligned}
   & \partial_{t}\hat{v} =-\gamma \hat{v}+F(\hat{v})+\hat{G}
   && \text{a.e. in $\hat \Omega_T$}, \\ 
   & \hat{v} = \hat{v}_{0}
   && \text{a.e. in $\hat \Omega \times \{ t =0 \}$}, \\ 
  \end{aligned}
\]
such that 
\begin{equation}\label{up}
  \hat v = v 
  \quad \text{a.e. in $\Omega_T$,}
  \qquad \Vert \hat{v} \Vert_{L^{\infty }(0,T;L^{2}(\Omega ))}^{2}
        +\gamma \Vert \hat{v} \Vert _{L^{2}(\Omega _{T})}^{2} \leq C_2, 
\end{equation}  
for some positive constant $C_2$ which depends only on 
$\Vert \hat{v}_{0}\Vert _{L^{2}(\hat{\Omega})}$, $\Vert \hat{G}\Vert
_{L^{2}(\hat{\Omega }_{T})}$ and $K_F$ defined in \cref{cor:FEstimates}.

We now introduce the following notation: for a function $\hat \psi \colon \hat
\Omega_T \to \RSet$, and a real number $h$ with $|h| < \delta$, we introduce the
function
\[
  \hat \psi_h \colon \Omega_T \to \RSet,
  \qquad 
(x,\xi ,t) \mapsto \hat \psi(x,\xi+h,t)- \hat \psi(x,\xi,t).
\]
The function $\hat{v}_{h}$ satisfies the system
\[
\begin{aligned}
& \partial_{t} \hat{v}_{h} = -\gamma \hat{v}_{h}+
  \bigl[ F(\hat v(\blank,\blank+h,\blank))-F(\hat v)\bigr] +\hat{G}_{h}
   && \text{in $\Omega_T$}, \\ 
 & \hat{v}_{h} =(\hat{v}_{0})_{h}
   && \text{on $\Omega \times \{ t=0 \}$}. \\ 
\end{aligned}%
\]
Multiplying  by $\hat{v}_{h}$, and using Young's inequality for products, we deduce 
\[
\begin{aligned}
  \frac{d}{dt}|\hat{v}_{h}|^{2} +\gamma |\hat{v}_{h}|^{2} 
  & = \bigl([ F(\hat{v}(\blank, \blank +h, \blank))-F(\hat{v})] 
    +\hat{G}_{h}\bigr) \hat{v}_{h} \\
  & \leq 
      \frac{2}{\gamma} \Bigl( 
        | F(\hat{v}(\blank, \blank +h, \blank))-F(\hat{v}) |^{2} 
      + |\hat{G}_{h}|^{2} \Bigr)
      +\frac{\gamma }{2}|\hat{v}_{h}|^{2}
      \qquad \text{a.e. in $\Omega_{T}$}
\end{aligned}
\]
Integrating the last deduced inequality over $\Omega \times (0,t)$ using
\cref{cor:FEstimates} we obtain
\[
  \Vert \hat{v}_{h}(t)\Vert _{L^{2}(\Omega )}^{2}
  \leq C_3\int_{0}^{t} 
  \left( 
    \Vert \hat{v}_{h}(s)\Vert_{L^{2}(\Omega )}^{2}
  + \Vert \hat{G}_{h}(s)\Vert_{L^{2}(\Omega )}^{2}\right) ds 
  + \Vert (\hat{v}_{0})_{h}\Vert _{L^{2}(\Omega)}^{2},
  \quad t\in (0,T),
\]
where $C_3$ depends on $\gamma$ and $K_F$. A further application of
Gronwall's inequality gives
\[
  \Vert \hat{v}_{h} \Vert _{L^{\infty }(0,T;L^{2}(\Omega ))}^{2} 
  \leq e^{C_3 T} C_4 
  \big( \Vert( \hat{v}_{0})_{h} \Vert_{L^{2}(\Omega )}^{2}
    +\Vert \hat{G}_{h}\Vert_{L^{2}(\Omega _{T})}^{2} 
  \big),
\]
where the constant $C_4$ depends on $T$, $\gamma$, and $K_F$. 

Applying \cite[Theorem 4, page 120]{Mih} and the first identity of 
\cref{up}, we conclude that 
\begin{equation*}
\partial _{\xi }v\in L^{\infty }(0,T;L^{2}(\Omega )),
\end{equation*}%
such that
\begin{equation} \label{regV}
\Vert \partial_{\xi } v \Vert _{L^{\infty }(0,T;L^{2}(\Omega ))}^{2}
\leq e^{C_3T}
C_4 \big( 
  \Vert \partial_{\xi} v_{0} \Vert _{L^{2}(\Omega)}^{2}
  +\Vert \partial _{\xi} G \Vert _{L^{2}(\Omega _{T})}^{2}),
\end{equation}
with $C_4$ dependent on $T$, $\gamma$, and $K_F$.

Now that we have bounded $\|\partial_\xi v(t) \|$ uniformly in $t$, we return to
\cref{eq:wEst}. Applying Gronwall's lemma to the inequality obtained from
\cref{eq:wEst} upon disregarding the second term on the left-hand side, and using
\cref{regV}, we obtain
\[
  \begin{aligned}
  \| w(t) \|^2_{L^2(\Omega)} 
  & \leq \nu \frac{e^{C_1 T}}{2} 
    \Vert \partial_{\xi } v \Vert _{L^{\infty }(0,T;L^{2}(\Omega ))}^{2} \\
  & \leq \nu \frac{e^{C_1 T} e^{C_3T}}{2} 
C_4 \big( 
  \Vert \partial_{\xi} v_{0} \Vert _{L^{2}(\Omega)}^{2}
  +\Vert \partial _{\xi} G \Vert _{L^{2}(\Omega _{T})}^{2}),
  \end{aligned}
\]
which proves the estimate \cref{rate}.
\end{proof}

\section{Numerical results}\label{sec:numerics} 
In this section we present numerical results in support of the theory presented in
the past sections. We simulate system \cref{eq:NFNew} on a rescaled domain 
\[
\TSet = \RSet/2L_{x}\ZSet, \qquad \Omega = \TSet \times (-L_\xi,L_\xi), \qquad
\Omega_T = \Omega \times [0,T],
\]
that is
\begin{equation}\label{eq:NFNewRep}
  \begin{aligned}
   & 
   \begin{aligned}[b]
   \partial_t v(x,\xi,t) 
   = (-\gamma + \nu \partial_{\xi}^{2})v(x,\xi,t) 
   &+ G(x,\xi,t) \\
   &+F(v(\blank,\blank,t))(x,\xi), 
   \end{aligned}
   && (x,\xi,t) \in \Omega_T, 
   \\
   &\partial_\xi v(x,-L_\xi,t) = 0, \qquad  \partial_\xi v(x,L_\xi,t) = 0, 
   &&(x,t) \in \TSet \times [0,T],\\
   &  v(x,\xi,0) = v_0(x,\xi) 
   &&(x,\xi) \in \Omega,
  \end{aligned}
\end{equation}
with the functional and parameter choices similar to \cite{Avitabile.2020}. In
particular, we define
\begin{equation}\label{eq:deltaSpec}
  \delta_\sigma(\xi)= \frac{1}{\sigma\sqrt{\pi}} \exp\biggl(
  -\frac{\xi}{\sigma}^2\biggr), \qquad \sigma \in \RSet_{>0},
\end{equation}
set the nonlinear integral term $F$ as in \cref{eq:FDef} with 
\begin{equation}\label{eq:FSpec}
  \begin{aligned}
  & W(x,\xi,x',\xi') =  \frac{\kappa}{2}\exp(-|x-x'|)\delta_\sigma(\xi-\xi_0)\delta_\sigma(\xi'), 
  &&\xi_0,\kappa \in \RSet \\
  & S(u,\mu,\theta) = \frac{1}{1+\exp(-\mu(u-\theta))},
  &&\mu \in \RSet_{ \geq 0}, \theta \in \RSet, 
  \end{aligned}
\end{equation}
and consider null external input, $G \equiv 0$, and a localised initial condition given by
\begin{equation}\label{eq:v0Spec}
  v_0(x,\xi) =
  \begin{cases}
    \alpha(x) \delta_\sigma(\xi) & \text{if $x >0$,} \\
    \alpha(-x) \delta_\sigma(\xi) & \text{if $x \leq 0$,} \\
  \end{cases}
  \qquad \alpha(x)  = 1 -S(x,\rho,x_0), \qquad \rho,x_0 \in \RSet.
\end{equation}

We simulate the system above using the implicit-explicit time stepper derived in
\cite{Avitabile.2020}, with time step size $\tau = 0.05$, and number of spatial grid
points $n_x =2^{12}$, $n_\xi = 2^{10}$. The initial condition $v_0$ is spatially
localised around the origin. It is close to zero everywhere, except in a small strip,
with characteristic length scale $x_0$ and $\sigma$ in the $x$ and $\xi$ direction,
respectively. Further, we remark that the synaptic kernel is chosen so that
connections are maximal at $\xi = \xi_0$.

We first consider the system in the absence of diffusion, $\nu =0$, as shown in
\cref{fig:profiles}(a). At $t=1$ the activity is localised with two peaks, the first
one centred at $\xi=0$, and the other at $\xi=\xi_0$: the former is induced by the
initial condition $v_0$, while the latter is forming spontaneously, owing to nonlinear
effects and driven by the kernel. The dynamics is dissipative for the first bump,
while the second peak dominates and persists over long time scales. On the other hand
when $\nu$ is small (see \cref{fig:profiles}(b)-(c)) the profiles are merged by diffusion, and the final
profile is wider in the $\xi$ direction, as expected.

\begin{figure}
  \centering
  \includegraphics{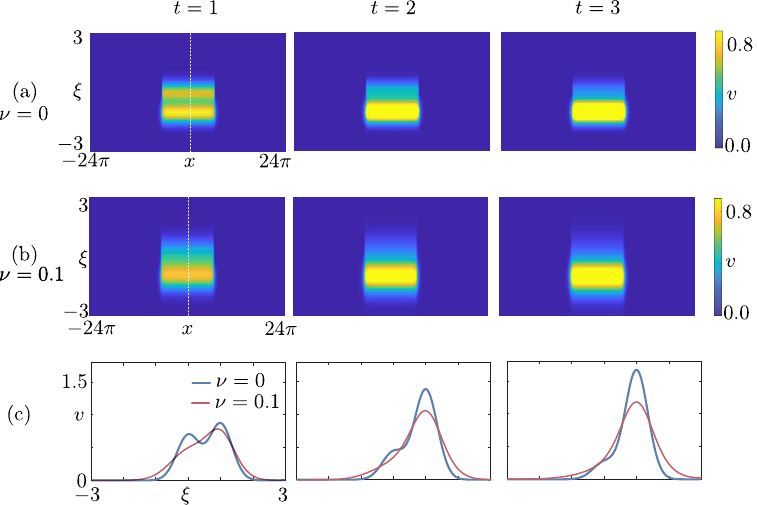}
  \caption{Numerical simulation of model \crefrange{eq:NFNewRep}{eq:v0Spec} in (a) the
    diffusion-less case $\nu=0$, and (b) under weak diffusion $\nu=0.1$, with
    solution profiles for $x=0$ (dashed lines) plotted in (c). In the
    initial stages of the model without diffusion, the voltage displays a profile
    with two bumps, which then evolves toward a single bump, centred at $\xi =\xi_0$.
    As expected, when diffusion is small ($\nu = 0.1$) the two bumps are merged by diffusion, and the
    final profile is wider and with lower amplitude. Other parameters: $\gamma=0.5$,
    $\sigma=0.5$, $\kappa=1$, $\xi_0=1$, $\mu=10^3$, $\theta=0.1$,
$\rho=5$, $x_0=20$, $L_\xi=3$, $L_x=24\pi$, and $T=3$.}
  \label{fig:profiles}
\end{figure}

Further, we can verify the statement of \Cref{thm:nuEstimate}. If we denote by
$v$ and $v_\nu$ the solution to the problem with $\nu=0$ and $\nu \neq 0$,
respectively, the theorem predicts that the square distance 
$e(\nu) = \| v - v_\nu \|^2_{L^\infty(0,T;L^2(\Omega))}$, is an $O(\nu)$ as $\nu \to 0$. In \cref{fig:error}
we provide numerical evidence of this prediction, by repeating the simulation above
for various values of $\{\nu_k\} \subset [0,0.1]$, computing $\{ e(\nu_k) \}$, and
fitting the points $\{ (\nu_k,e(\nu_k)) \}$ with a linear regression.

\begin{figure}
  \centering
  \includegraphics{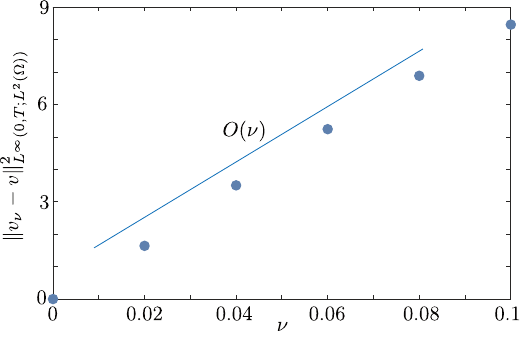}
  \caption{Square distance between solutions with and without diffusion $e(\nu) =
    \| v - v_\nu \|^2$, where the norm is on ${L^\infty(0,T;L^2(\Omega))}$, for various values of
    $\nu \in [0,0.1]$. Points $(\nu_k, e(\nu_k))$ are fit with a linear regression,
    providing numerical evidence that $e(\nu) = O(\nu)$ as $\nu \to 0$, as predicted
    by \Cref{thm:nuEstimate}. Parameters as in \cref{fig:profiles}.}
  \label{fig:error}
\end{figure}

\section{Conclusions}\label{sec:conclusions} 
In this paper we studied the well-posedness and regularity of solutions to neural
field models with dendrites. These models are strongly anisotropic, as diffusion
acts only on one of the spatial direction (the dendritic direction). The particular
laminar structure of the model allows one to characterise solutions as perturbations
to a classical (diffusion-less) neural field models. This structure has already been
exploited numerically in \cite{Avitabile.2020}, but was not supported by analytical
investigations.

Tackling this problem required \changed{studying perturbed} weak solutions to the problem,
and this opens up the possibility of devising schemes in full generality. The
heuristic schemes presented in \cite{Avitabile.2020} relied on collocation on a
simple domain. The weak formulations studied here are the starting point for the
formulation of finite-element schemes, in which the domain $\Omega$ is arbitrary, an
avenue that will be explored in future work.

Furthermore, our study shows that solutions to the dendritic neural fields can be
approximated, in the limit of small diffusion, with classical neural fields, for
which finite-element and spectral methods have recently been developed. We have also
provided numerical evidence that this perturbation results hold on $O(1)$ time
horizons, in spite of the exponentially growing constants predicted by
\cref{thm:nuEstimate}.

This suggests that it should be possible to study splitting schemes in which, at each time
interval, a predictor step is carried out using standard neural field models, while
corrections are made to solve the original problem with diffusion. The latter problem
is relatively cheap to carry out, in view of the anisotropic diffusion operator,
which acts solely along $\xi$. It will be of particular interest to try this
splitting strategy on problems for which diffusion is not necessarily small, and
study the corresponding accuracy of the schemes.
 
\section*{Acknowledgements}
DA acknowledges funding from the Dutch Research Council through the NWO Grant
WC.019.009 ``Spatio-temporal canards and delayed bifurcations in continuous
neurobiological networks". DA's work was also supported by the National Science Foundation
under Grant No. DMS-1929284 while the author was in residence at the Institute for
Computational and Experimental Research in Mathematics in Providence, RI, during the
``Math + Neuroscience: Strengthening the Interplay Between Theory and Mathematics"
programme.
The work of NVC was supported by FAPESP (Funda\c{c}\~{a}o de Amparo \`{a}
Pesquisa do Estado de S\~{a}o Paulo),  project 2021/03758-8, ``Mathematical problems
in fluid dynamics". Also PL and NVC research was supported by FAPESP
(Funda\c{c}\~{a}o de Amparo \`{a} Pesquisa do Estado de S\~{a}o Paulo), through the
Visiting Professor Project  2022/02269-6, ``Numerical and Analytical Investigation of
the Neural Field Equation". The work of PL was supported by FCT within project
UIDB/04621/2020, Center for Computational and Stochastic Mathematics.

\bibliographystyle{siamplain}
\bibliography{references}

\begin{thebibliography}{10}

\bibitem{Mih}
{\sc { Mikhailov V. P.}}, {\em Partial {Differential} {Equations}}, Mir Publishers, 1978.

\bibitem{amari1977dynamics}
{\sc S.-i. Amari}, {\em Dynamics of pattern formation in lateral-inhibition type neural fields}, Biological {C}ybernetics, 27 (1977), pp.~77--87.

\bibitem{AvitabileProjection}
{\sc D.~Avitabile}, {\em Projection {Methods} for {Neural} {Field} {Equations}}, SIAM Journal on Numerical Analysis, 61 (2023), pp.~562--591, \url{https://doi.org/10.1137/21M1463768}, \url{https://epubs.siam.org/doi/10.1137/21M1463768} (accessed 2023-10-20).

\bibitem{Avitabile.2020}
{\sc D.~Avitabile, S.~Coombes, and P.~M. Lima}, {\em {Numerical investigation of a neural field model including dendritic processing}}, Journal of Computational Dynamics, 7 (2020), pp.~271--290, \url{https://doi.org/10.3934/jcd.2020011}.

\bibitem{baspinar:hal-04008117}
{\sc E.~Baspinar, D.~Avitabile, M.~Desroches, and M.~Mantegazza}, {\em {A neural field model for ignition and propagation of cortical spreading depression}}.
\newblock working paper or preprint, Feb. 2023, \url{https://hal.science/hal-04008117}.

\bibitem{berestyckiNonlocalFisherKPP2009}
{\sc H.~Berestycki, G.~Nadin, B.~Perthame, and L.~Ryzhik}, {\em The non-local {{Fisher}}--{{KPP}} equation: Travelling waves and steady states}, Nonlinearity, 22 (2009), pp.~2813--2844, \url{https://doi.org/10.1088/0951-7715/22/12/002}.

\bibitem{bouinBramsonDelayNonlocal2020}
{\sc E.~Bouin, C.~Henderson, and L.~Ryzhik}, {\em The {{Bramson}} delay in the non-local {{Fisher-KPP}} equation}, Annales de l'Institut Henri Poincar{\'e} C, Analyse non lin{\'e}aire, 37 (2020), pp.~51--77, \url{https://doi.org/10.1016/j.anihpc.2019.07.001}.

\bibitem{Bressloff.2014k0p}
{\sc P.~C. Bressloff}, {\em {Waves in Neural Media}}, Springer New York, Springer New York, 2014, \url{https://doi.org/10.1007/978-1-4614-8866-8}.

\bibitem{Bresloff.2012}
{\sc P.~C., Bressloff}, {\em {Spatiotemporal dynamics of continuum neural fields}}, Journal of Physics A: Mathematical and Theoretical, 45 (2012), p.~033001, \url{https://doi.org/10.1088/1751-8113/45/3/033001}.

\bibitem{carrereInfluenceMutationsPhenotypicallystructured2020}
{\sc C.~Carr{\`e}re and G.~Nadin}, {\em Influence of mutations in phenotypically-structured populations in time periodic environment}, Discrete \& Continuous Dynamical Systems - B, 25 (2020), pp.~3609--3630, \url{https://doi.org/10.3934/dcdsb.2020075}.

\bibitem{Chemetov2}
{\sc N.~V. Chemetov and F.~Cipriano}, {\em {Boundary layer problem: Navier-Stokes equations and Euler equations}}, Nonlinear Analysis: Real World Appl., 6 (2013), pp.~2091--2104, \url{https://doi.org/10.1016/j.nonrwa.2013.03.003}.

\bibitem{Chemetov3}
{\sc N.~V. Chemetov and F.~Cipriano}, {\em {The Inviscid Limit for the Navier-Stokes Equations with Slip Condition on Permeable Walls}}, J. Nonlinear Science, 5 (2013), pp.~731--750, \url{https://doi.org/10.1007/s00332-013-9166-5}.

\bibitem{Chemetov4}
{\sc N.~V. Chemetov and F.~Cipriano}, {\em {Inviscid limit for Navier--Stokes equations in domains with permeable boundaries}}, Applied Math. Letters,  (2014), pp.~6--11, \url{https://doi.org/10.1016/j.aml.2014.02.012}.

\bibitem{Chemetov5}
{\sc N.~V. Chemetov and F.~Cipriano}, {\em {The inviscid limit for slip boundary conditions}}, Hyperbolic Problems: Theory, Numerics, Applications, 1 (2014), pp.~431--438, \url{https://www.aimsciences.org/book/AM/volume/29}.

\bibitem{cihakMultiscaleMotionDeformation2023}
{\sc H.~L. Cihak and Z.~P. Kilpatrick}, {\em Multiscale motion and deformation of bumps in stochastic neural fields with dynamic connectivity}, June 2023, \url{https://doi.org/10.48550/arXiv.2306.16537}, \url{http://arxiv.org/abs/2306.16537}.

\bibitem{coombes2014neural}
{\sc S.~Coombes, P.~beim Graben, R.~Potthast, and J.~Wright}, {\em Neural fields: theory and applications}, Springer, 2014.

\bibitem{coombesNeurodynamics2023}
{\sc S.~Coombes and K.~C.~A. Wedgwood}, {\em Neurodynamics: {An} {Applied} {Mathematics} {Perspective}}, vol.~75 of Texts in {Applied} {Mathematics}, Springer International Publishing, 2023, \url{https://doi.org/10.1007/978-3-031-21916-0}.

\bibitem{Ermentrout.1998qno}
{\sc B.~Ermentrout}, {\em {Neural networks as spatio-temporal pattern-forming systems}}, Reports on Progress in Physics, 61 (1998), pp.~353 -- 430, \url{https://doi.org/10.1088/0034-4885/61/4/002}.

\bibitem{ermentroutExistenceUniquenessTravelling1993}
{\sc G.~B. Ermentrout and J.~B. McLeod}, {\em Existence and uniqueness of travelling waves for a neural network}, Proceedings of the Royal Society of Edinburgh Section A: Mathematics, 123 (1993), pp.~461--478, \url{https://doi.org/10.1017/S030821050002583X}.

\bibitem{Ermentrout.2010}
{\sc G.~B. Ermentrout and D.~H. Terman}, {\em {Mathematical Foundations of Neuroscience}}, vol.~35 of Interdisciplinary Applied Mathematics, Springer New York, 2010, \url{https://doi.org/10.1007/978-0-387-87708-2}.

\bibitem{Evans}
{\sc L.~C. Evans}, {\em Partial differential equations}, no.~19 in Graduate studies in mathematics, American Mathematical Society, Providence, Rhode Island, second edition~ed., 2022.

\bibitem{faugeras2008absolute}
{\sc O.~Faugeras, F.~Grimbert, and J.-J. Slotine}, {\em Absolute stability and complete synchronization in a class of neural fields models}, SIAM Journal on applied mathematics, 69 (2008), pp.~205--250.

\bibitem{Faugeras:2009gn}
{\sc O.~Faugeras, R.~Veltz, and F.~Grimbert}, {\em {Persistent Neural States: Stationary Localized Activity Patterns in Nonlinear Continuous n-Population, q-Dimensional Neural Networks}}, Neural Computation, 21 (2009), pp.~147--187.

\bibitem{faye2010a}
{\sc G.~Faye and O.~Faugeras}, {\em Some theoretical and numerical results for delayed neural field equations}, Physica D: Nonlinear Phenomena, 239 (2010), pp.~561--578, \url{https://doi.org/10.1016/j.physd.2010.01.010}.

\bibitem{Folias.2005}
{\sc S.~E. Folias and P.~C., Bressloff}, {\em {Breathers in Two-Dimensional Neural Media}}, Physical Review Letters, 95 (2005), p.~208107.

\bibitem{Gils.2013}
{\sc S.~A.~v. Gils, S.~G. Janssens, Y.~A. Kuznetsov, and S.~Visser}, {\em {On local bifurcations in neural field models with transmission delays}}, Journal of Mathematical Biology, 66 (2013), pp.~837 -- 887, \url{https://doi.org/10.1007/s00285-012-0598-6}.

\bibitem{Harris}
{\sc K.~M. Harris and J.~Spacek}, {\em 1dendrite structure}, in Dendrites, Oxford University Press, 03 2016, \url{https://doi.org/10.1093/acprof:oso/9780198745273.003.0001}.

\bibitem{Hutt.2010}
{\sc A.~Hutt and N.~Rougier}, {\em Activity spread and breathers induced by finite transmission speeds in two-dimensional neural fields}, Physical Review E, 82 (2010), p.~055701, \url{https://doi.org/10.1103/physreve.82.055701}.

\bibitem{kilpatrick2013wandering}
{\sc Z.~P. Kilpatrick and B.~Ermentrout}, {\em Wandering bumps in stochastic neural fields}, SIAM Journal on Applied Dynamical Systems, 12 (2013), pp.~61--94.

\bibitem{kilpatrickPulseBifurcationsStochastic2014}
{\sc Z.~P. Kilpatrick and G.~Faye}, {\em Pulse {Bifurcations} in {Stochastic} {Neural} {Fields}}, SIAM Journal on Applied Dynamical Systems, 13 (2014), pp.~830--860, \url{https://doi.org/10.1137/140951369}, \url{http://epubs.siam.org/doi/10.1137/140951369} (accessed 2023-11-23).

\bibitem{laing2014numerical}
{\sc C.~R. Laing}, {\em Numerical bifurcation theory for high-dimensional neural models}, The Journal of Mathematical Neuroscience, 4 (2014), pp.~1--27.

\bibitem{Lima2015xt}
{\sc P.~M. Lima and E.~Buckwar}, {\em {Numerical Solution of the Neural Field Equation in the Two-Dimensional Case}}, SIAM Journal on Scientific Computing, 37 (2015), pp.~B962--B979, \url{https://doi.org/10.1137/15m1022562}.

\bibitem{maChapterAxonGrowth2013}
{\sc L.~Ma and D.~A. Gibson}, {\em Chapter 3 - {Axon} {Growth} and {Branching}}, in Cellular {Migration} and {Formation} of {Neuronal} {Connections}, J.~L.~R. Rubenstein and P.~Rakic, eds., Academic Press, Oxford, Jan. 2013, pp.~51--68, \url{https://doi.org/10.1016/B978-0-12-397266-8.00001-6}, \url{https://www.sciencedirect.com/science/article/pii/B9780123972668000016} (accessed 2023-11-22).

\bibitem{meijerTravellingWavesNeural2014}
{\sc H.~G.~E. Meijer and S.~Coombes}, {\em Travelling waves in a neural field model with refractoriness}, Journal of Mathematical Biology, 68 (2014), pp.~1249--1268, \url{https://doi.org/10.1007/s00285-013-0670-x}, \url{http://link.springer.com/10.1007/s00285-013-0670-x} (accessed 2022-03-21).

\bibitem{mnrr}
{\sc J.~Málek, J.~Nečas, M.~Rokyta, and M.~Růžička}, {\em Weak and measure-valued solutions to evolutionary {PDEs}}, no.~13 in Applied mathematics and mathematical computation, Chapman \& Hall, London Weinheim, 1. ed~ed., 1996.

\bibitem{perthameConcentrationNonlocalFisher2007}
{\sc B.~Perthame and S.~G{\'e}nieys}, {\em Concentration in the {{Nonlocal Fisher Equation}}: The {{Hamilton-Jacobi Limit}}}, Mathematical Modelling of Natural Phenomena, 2 (2007), pp.~135--151, \url{https://doi.org/10.1051/mmnp:2008029}.

\bibitem{pintoSpatiallyStructuredActivity2001}
{\sc D.~J. Pinto and G.~B. Ermentrout}, {\em Spatially {Structured} {Activity} in {Synaptically} {Coupled} {Neuronal} {Networks}: {II}. {Lateral} {Inhibition} and {Standing} {Pulses}}, SIAM Journal on Applied Mathematics, 62 (2001), pp.~226--243, \url{https://doi.org/10.1137/S0036139900346465}.

\bibitem{Potthast:2010kb}
{\sc R.~Potthast and P.~beim Graben}, {\em {Existence and properties of solutions for neural field equations}}, Mathematical Methods in the Applied Sciences, 33 (2010), pp.~935--949.

\bibitem{Rankin2013}
{\sc J.~Rankin, D.~Avitabile, J.~Baladron, G.~Faye, and D.~J.~B. Lloyd}, {\em {Continuation of localised coherent structures in nonlocal neural field equations}}, SIAM Journal on Scientific Computing,  (2013), \url{SIAM Journal on Scientific Computing}.
\newblock 21 pages, 13 figures, submitted for peer review.

\bibitem{schmidt2020bumps}
{\sc H.~Schmidt and D.~Avitabile}, {\em Bumps and oscillons in networks of spiking neurons}, Chaos: An Interdisciplinary Journal of Nonlinear Science, 30 (2020), p.~033133.

\bibitem{spekNeuralFieldModels2020}
{\sc L.~Spek, Y.~A. Kuznetsov, and S.~A. van Gils}, {\em Neural field models with transmission delays and diffusion}, The Journal of Mathematical Neuroscience, 10 (2020), p.~21, \url{https://doi.org/10.1186/s13408-020-00098-5}.

\bibitem{tuckwellIntroductionTheoreticalNeurobiology2006}
{\sc H.~C. Tuckwell}, {\em Introduction to Theoretical Neurobiology. {{Volume}} 1: {{Linear}} Cable Theory and Dendritic Structure}, Cambridge Studies in Mathematical Biology, Cambridge University Press, Cambridge ; New York, 2006.

\bibitem{Chemetov1}
{\sc C.~N. V. and C.~F.}, {\em {Injection-Suction Control for Two-Dimensional Navier-Stokes Equations with Slippage}}, SIAM J. Control Optim., 2 (2018), pp.~1253--1281, \url{https://doi.org/10.1137/17M1121196}.

\bibitem{visserSpikingNeuronsBrain2013}
{\sc S.~Visser}, {\em From spiking neurons to brain waves}, PhD Thesis, University of Twente,  (2013), \url{https://doi.org/10.3990/1.9789036535083}, \url{https://research.utwente.nl/en/publications/from-spiking-neurons-to-brain-waves} (accessed 2022-10-03).

\bibitem{Visser.2017}
{\sc S.~Visser, R.~Nicks, O.~Faugeras, and S.~Coombes}, {\em {Standing and travelling waves in a spherical brain model: The Nunez model revisited.}}, Physica D, 349 (2017), pp.~27 -- 45, \url{https://doi.org/10.1016/j.physd.2017.02.017}.

\bibitem{volpertEllipticPartialDifferential2014}
{\sc V.~Volpert}, {\em Elliptic {{Partial Differential Equations}}. {{Volume}}: 2 {{Reaction-Diffusion Equations}}}, vol.~104 of Monographs in {{Mathematics}}, Springer Basel, Basel, 2014, \url{https://doi.org/10.1007/978-3-0348-0813-2}.

\bibitem{whiteley2022modelling}
{\sc T.~D. Whiteley, D.~Avitabile, P.-O. Siebers, D.~Robinson, and M.~R. Owen}, {\em Modelling the emergence of cities and urban patterning using coupled integro-differential equations}, Journal of The Royal Society Interface, 19 (2022), p.~20220176, \url{https://doi.org/10.1098/rsif.2022.0176}, \url{https://royalsocietypublishing.org/doi/abs/10.1098/rsif.2022.0176}, \url{https://arxiv.org/abs/https://royalsocietypublishing.org/doi/pdf/10.1098/rsif.2022.0176}.

\bibitem{wilson1973mathematical}
{\sc H.~R. Wilson and J.~D. Cowan}, {\em A mathematical theory of the functional dynamics of cortical and thalamic nervous tissue}, Kybernetik, 13 (1973), pp.~55--80.

\end{thebibliography}
\end{document}